\def\N{\mathbb{N}}
\def\ff{\frak}
\def\Spec{\mbox{\rm Spec}\:}
\def\p{\mathfrak{p}}
\def\m{{\mathfrak m}} 
\def\dim{{\rm dim~ }}
\def\ord{{\rm ord }}
\def\edeg{{\rm e}}
\spnewtheorem{fact}[theorem]{Fact}{\it}{}
\spnewtheorem{discussion}[theorem]{Discussion}{\bf}{}
\spnewtheorem{construction}[theorem]{Construction}{\bf}{}
\begin{document}


\title*{Directed unions of local quadratic transforms  of  regular local rings and  pullbacks}

\author{Name of First Author and Name of Second Author}
\institute{Name of First Author \at Name, Address of Institute, \email{name@email.address}
\and Name of Second Author \at Name, Address of Institute \email{name@email.address}}

\author{Lorenzo Guerrieri, William Heinzer, Bruce Olberding and Matt  Toeniskoetter}
\institute{Lorenzo Guerrieri \at Università di Catania, Viale Andrea Doria 6, 95125 Catania, Italy \email{guerrieri@dmi.unict.it}
\and William Heinzer \at 
Department of Mathematics, Purdue University, West Lafayette, Indiana 47907
\email{heinzer@purdue.edu}
\and
Bruce Olberding \at New Mexico State University, Department of Mathematical Sciences, Las Cruces, NM 88003-8001 
\email{olberdin@nmsu.edu}
\and Matt Toeniskoetter \at 
Department of Mathematics, Purdue University, West Lafayette, Indiana 47907
\email{mtoenisk@purdue.edu}
}


%

\maketitle

\abstract*{Each chapter should be preceded by an abstract (10--15 lines long)}

\abstract{Let  $\{ R_n, \m_n \}_{n \ge 0}$ be an  infinite sequence of regular local rings with $R_{n+1}$ birationally dominating $R_n$  and $\m_nR_{n+1}$ a principal ideal of $R_{n+1}$  for 
each $n$.    We  examine properties of the integrally closed local domain $S = \bigcup_{n \ge 0}R_n$. }


{\noindent}{\bf Keywords} $\:$  $\bullet$ regular local ring  $\bullet$  local  quadratic transform 
$\bullet$   valuation ring $\bullet$     pullback construction
$\:$ 

{\noindent}{\bf Mathematics Subject Classification (2010):} 
{13H05,  13A15, 13A18}

\section{Introduction}

Let $R$ be a regular local ring with maximal ideal ${\ff m} = (x_1,\ldots,x_n)R$, where $n = \dim R$ is the Krull dimension of $R$.  Choose $i \in \{1,\ldots,n\}$, and consider the overring $R[\frac{x_1}{x_i},\ldots,\frac{x_n}{x_i}]$ of $R$. 
Choose any prime ideal $P$ of $R[\frac{x_1}{x_i},\ldots,\frac{x_n}{x_i}]$ that contains ${\ff m}$. Then the ring
 $R_1 := R[\frac{x_1}{x_i},\ldots,\frac{x_n}{x_i}]_P$ is a {\it local quadratic transform} of $R$, and 
 $R_1$ is again a regular local ring and $\dim R_1 \le n$. 
 Iterating the process we obtain a sequence $R = R_0 \subseteq R_1 \subseteq R_2 \subseteq \cdots $ of regular local overrings of $R$ such that for each $i$, $R_{i+1}$ is a local quadratic transform of $R_i$. 
 The sequence of positive integers 
  $\{\dim R_i\}_{i \in \N}$   stabilizes, and  $\dim R_i = \dim R_{i+1}$ for  
  all sufficiently large $i$.  If $\dim R_i = 1$,  then necessarily $R_i = R_{i+1}$,
  while if $\dim R_i \ge 2$,  then $R_i \subsetneq R_{i+1}$.


 The process of iterating local quadratic transforms of the same Krull dimension is the algebraic expression of following a closed point through a sequence of blow-ups of a nonsingular point of an algebraic  variety, 
 with each blow up occurring at a closed point in the fiber of the previous blow-up. 
  This geometric process plays a central role in embedded resolution of singularities for curves on surfaces (see, for example, \cite{Abh4} and \cite[Sections 3.4 and 3.5]{Cut}), 
 as well as factorization of birational morphisms between nonsingular surfaces (\cite[Theorem 3]{Abh} and \cite[Lemma, p.~538]{ZarR}).  These applications  depend on properties of  iterated sequences of local quadratic transforms of a two-dimensional regular local ring.  For  a two-dimensional regular local  ring $R$, 
  Abhyankar \cite[Lemma 12]{Abh} shows   that  the limit of this process of iterating local quadratic transforms 
  $R = R_0 \subseteq R_1 \subseteq R_2 \subseteq \cdots $ results in a valuation ring that birationally dominates $R$; i.e., ${\cal V} = \bigcup_{i =0}^{\infty}R_i$ is a valuation ring  with the same quotient field  as $R$ and the maximal ideal of $\mathcal V$
  contains     the maximal ideal of $R$.  
  
  Moving beyond dimension two, examples due to David Shannon \cite[Examples 4.7 and 4.17]{Sha} show that the union $S= \bigcup_{i}R_i$ 
  of an iterated  sequence of local quadratic transforms of a regular local ring of Krull dimension $>2$ 
  need not be a valuation ring.      The recent articles \cite{HLOST,HOT} address the structure of such rings 
  and how this  structure 
  encodes asymptotic properties of the sequence $\{R_i\}_{i=0}^\infty$. 
  We call  $S$  a {\it quadratic Shannon extension} of $R$. 
  In general, a quadratic Shannon extension  need not be a valuation ring nor a Noetherian ring, although it is always an intersection of two such rings (see Theorem~\ref{hull}).   
   
    The class of quadratic Shannon extensions separates naturally into two cases, the archimedean and non-archimedean cases. A quadratic Shannon extension $S$ is non-archimedean if there is an element $x$ in the maximal ideal of $S$ such that $\bigcap_{i>0}x^iS \ne 0$.  The class of non-archimedean
    quadratic  Shannon extensions is analyzed in  detail in \cite{HLOST} and \cite{HOT}. 
     We carry this analysis further in the present article by using  techniques from 
    multiplicative ideal theory to classify  a non-archimedean quadratic Shannon extension as the 
     pullback of a valuation ring of rational rank one along a homomorphism from a regular local 
     ring onto its residue field. 
 We present  several   variations of  this classification in Lemma~\ref{3.6} and Theorems~\ref{pullback thm} and~\ref{pullback cor}.
   
     The pullback description leads in Theorem~\ref{3.8} to existence results for both archimedean and non-archimedean quadratic Shannon extensions  contained in a localization of the base ring at a nonmaximal prime ideal.  As another application,  in Theorem~\ref{function field} we use pullbacks to characterize the quadratic Shannon extensions $S$ of regular local rings $R$ such that  $R$  is essentially finitely generated over a field of characteristic $0$ and $S$ has a principal maximal ideal. 
    
      That non-archimedean quadratic Shannon extensions occur as pullbacks is also useful because of the extensive literature on 
transfer properties between the rings  in a pullback square.   
In   Section \ref{section 6}   we use the pullback 
classification along with structural results for archimedean quadratic Shannon extensions 
from \cite{HLOST} to show in Theorem~\ref{quadratic GCD}  that a quadratic Shannon extension 
is coherent if and only if it is 
a valuation domain.


Our methods sometimes involve local quadratic transforms of Noetherian local domains that need not be 
regular local rings. 
To formalize these notions as well as those mentioned above,   let $(R,\m)$ be a Noetherian local domain 
and let $(V, \m_V)$ be a valuation domain birationally dominating $R$.
Then $\m V = xV$ for some $x \in \m$.
The ring $R_1 = R[\m/x]_{\m_V \cap R[\m/x]}$ is called a 
{\it local quadratic transform} (LQT) of $R$ along $V$.
The ring $R_1$ is a Noetherian local domain that dominates $R$ with maximal ideal
 $\m_1 = \m_V \cap R_1$.
Since $V$ birationally dominates $R_1$, we may iterate this process 
to obtain an infinite sequence $\{ R_n \}_{n \ge 0}$ of LQTs of $R_0 = R$ along $V$.
If $R_n = V$ for some $n$, then $V$ is a DVR and the sequence stabilizes with $R_m = V$ for all $m \ge n$.
Otherwise, $\{R_n\}$ is an infinite strictly ascending sequence of Noetherian local domains.

If $R$ is a regular local ring (RLR), it is well known that $R_1$ is an RLR; cf. \cite[Corollary 38.2]{Nag}.
Moreover, $R = R_1$ if and only if $\dim R \le 1$. 
Assume that $R$ is an RLR with $\dim R \ge 2$ and $V$ is minimal as a valuation overring of $R$.
Then $\dim R_1 = \dim R$, and the process may be continued 
by defining $R_2$ to be the LQT of $R_1$ along $V$.
Continuing the procedure yields an infinite strictly ascending sequence $\{R_n\}_{n \in \N}$
of RLRs all dominated by $V$.

  In general,  our notation is as in Matsumura   \cite{Mat}.  Thus a local ring need not be Noetherian.
 An element $x$ in the maximal ideal $\m$ of a regular local ring $R$ is said
to be a {\it regular parameter} if $x \not\in \m^2$.   It then follows that  the residue class ring $R/xR$ is again a regular local 
ring.  
We refer to an extension ring $B$ of an integral domain $A$ as an {\it overring of} $A$ if $B$ is a subring of the quotient field of $A$.  If, in addition,  $A$ and $B$ are local and the inclusion map $A \hookrightarrow B$ is a 
local homomorphism,  we say that $B$ {\it birationally dominates}  $A$. 
 We use UFD as an abbreviation for unique factorization domain,  and DVR as an abbreviation for rank 1 discrete valuation ring.  If $P$ is a prime ideal of a ring $A$, we denote by $\kappa(P)$ the residue field $A_P/PA_P$ of $A_P$.

\section{Quadratic Shannon extensions} 

Let $(R,\m)$ be a regular local ring  with $\dim R \ge 2$ and let $F$ denote the quotient field of $R$. 
David Shannon's work in \cite{Sha}  on sequences of quadratic and monoidal transforms of 
regular local rings  motivates  our terminology  quadratic Shannon extension  in  Definition~\ref{1.1}.

\begin{definition} \label{1.1}
  Let $(R,\m)$ be a regular local ring with $\dim R \ge 2$. With $R = R_0$,
  let $\{R_n, \m_n\}$ be an infinite sequence of RLRs, 
  where $\dim R_n \ge 2$ for each $n$.
       If $R_{n+1}$ is an LQT of $R_n$  for each $n$,  
      then the ring $S = \bigcup_{n \ge 0}R_n$ is called a 
      {\it quadratic Shannon extension}\footnote{
        In \cite{HLOST} and \cite{HOT}, the authors call $S$ a Shannon extension of $R$. We
        have made a distinction here with monoidal transforms. 
        Since $\dim R_n \ge 2$, we have  $R_n \subsetneq  R_{n+1}$ for each positive integer $n$ 
        and $\bigcup _n R_n$ is an infinite ascending union.
      } of $R$.
    \end{definition}

If $\dim R = 2$, then the quadratic Shannon extensions of $R$ are precisely the valuation rings that birationally dominate $R$ and are minimal as a valuation overring of $R$ \cite[Lemma 12]{Abh}.  
If $\dim R > 2$, then,  examples due to Shannon \cite[Examples~4.7 and~4.17]{Sha} show
that   there are quadratic Shannon extensions that are not valuation rings. 
Similarly, if $\dim R >2$, then there are valuations rings $V$ that birationally dominate $R$ 
with $V$ minimal as a valuation overring of $R$,   but $V$ is not a Shannon extension of $R$.
 Indeed, if $V$ has rank $>2$, then $V$ is not a quadratic Shannon extension 
 of $R$; see \cite[Proposition 7]{Gra}.   

These observations raise the question of   the ideal-theoretic structure of a quadratic Shannon extension of a regular local ring $R$ with $\dim R > 2$, a question that was taken up in \cite{HLOST} and \cite{HOT}. 
  In this section we recall some of the results from \cite{HLOST} and \cite{HOT} with special emphasis on non-archimedean quadratic Shannon extensions, a class of  Shannon extensions that we classify in Sections~\ref{section 3} and \ref{section 3.5}.

To each quadratic Shannon extension  
there is an associated 
 collection of rank 1 discrete valuation rings. 
Let $S  = \bigcup_{i \ge 0}R_i$ be a  quadratic Shannon extension of $R = R_0$. 
For each $i$, let $V_i$ be the  DVR  
defined by  the {\it order function}  $\ord_{R_i}$,  where for $x \in R_i$, 
$\ord_{R_i}(x) = \sup \{ n \mid x \in \m_i^n \}$ and $\ord_{R_i}$ is extended to the quotient field of $R_i$ by defining $\ord_{R_i}(x/y) = \ord_{R_i}(x) -\ord_{R_i}(y)$ for all $x,y \in R_i$ with $y \ne 0$. 
The family $\{V_i\}_{i=0}^\infty$ determines 
a unique set  
\begin{equation*}\label{equation V}
	V ~   =   ~    \bigcup_{n \ge 0}~ \bigcap_{i \ge n} V_i = \{ a \in F ~ | ~  \ord_{R_i }(a) \ge 0 \text{ for } i \gg 0 \}.\end{equation*}

The set $V$  consists of the elements in $F$ that are in all but finitely many of the  $V_i$.
In \cite[Corollary 5.3]{HLOST},  the authors prove that $V$ is a valuation domain that birationally
dominates $S$,  and call $V$ the {\it boundary valuation ring} of the Shannon extension $S$.

 Theorem~\ref{hull} records properties of a quadratic Shannon extension.

\begin{theorem} $\phantom{}$ \hspace{-.09in}  {\rm \cite[Theorems 4.1, 5.4 and~8.1]{HLOST}}    \label{hull} 
    Let $(S,{\ff m}_S)$ be a  quadratic  Shannon extension of a regular local ring $R$.
Let $T$ be the intersection of all the 
DVR  overrings of $R$   that properly contain $S$, and let $V$ be the boundary valuation ring of $S$.  
Then:
\label{flat} 
\begin{description}[$(2)$]

\item[{\em (1)}] 
$\dim S = 1$     if and only if   
$S$ is a rank 1 valuation ring.

\item[{\em (2)}]
$S = V \cap T$.  

\item[{\em (3)}]
 
There exists $x \in \m_S$ such that $xS$ is $\m_S$-primary, and $T = S [1/x]$ for any such $x$.
It follows that the units of $T$ are precisely the ratios of $\m_S$-primary elements of $S$ and 
$\dim T = \dim S - 1$.

\item[{\em (4)}]

$T$ is a localization of $R_i$ for $i \gg 0$. In particular, $T$ is a Noetherian regular UFD.

\item[{\em (5)}]

$T$ is the unique minimal proper Noetherian overring of $S$. 

 \end{description} 
\end{theorem}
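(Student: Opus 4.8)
Since this statement collects \cite[Theorems 4.1, 5.4 and 8.1]{HLOST}, I outline a unified plan. After discarding finitely many terms I may assume $\dim R_n=d\ge 2$ for every $n$. For $n\ge 1$ write $\m_{n-1}R_n=x_nR_n$, choosing $x_n$ inside a minimal generating set of $\m_{n-1}$; then $x_n$ is a regular parameter of $R_n$ and $R_n/x_nR_n$ is a regular domain. Put $x:=x_1\in\m_0$; extending ideals gives $\m_0R_n=xR_n$ for all $n\ge 1$, so $\m_0S=xS$ and $x\in\m_S$. This $x$ will be the witness in (3).

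The technical heart is the claim that $\sqrt{xS}=\m_S$, i.e. that every $z\in\m_S$ has a power lying in $xS=\m_0S$. The plan is a transform analysis: fix $n$ with $z\in\m_n$ and work in $R_m$ for $m\gg n$; since each $R_m$ is a UFD, $z^k\in xR_m$ holds as soon as every prime factor of $x$ in $R_m$ also divides $z$, so the task reduces to showing that the factorization of $x$ in $R_m$ stabilizes, in a suitable sense, as $m\to\infty$ and is eventually subordinate to $z$ — which is precisely where the constancy of $\dim R_m$ enters. I expect this claim, together with its companion (also part of the structure theory of \cite{HLOST}, tied to the archimedean/non-archimedean dichotomy) that no DVR strictly containing $S$ can dominate $S$, to be the main obstacle.

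Granting $\sqrt{xS}=\m_S$, the remaining assertions follow. In $R_{m+1}$ one has $x=x_{m+1}^{\ord_{R_m}(x)}\,w$, where $w$ is the transform of $x$; inverting $x$ therefore makes $x_{m+1}$ (and $w$) units of $R_{m+1}[1/x]$, which collapses the transform step, so the ascending chain $\{R_m[1/x]\}$ has union $S[1/x]$ and (when $\dim S\ge 2$) stabilizes — whence $S[1/x]$ is a localization of $R_m$ for $m\gg 0$, hence a Noetherian regular UFD, of dimension $d-1$; the case $\dim S=1$, in which $S[1/x]=F$, is settled in the final step. To identify $S[1/x]$ with $T$: being a Krull domain strictly containing $S$, $S[1/x]$ is the intersection of its localizations at height-one primes, each of which is a DVR overring of $R$ strictly containing $S$, so $T\subseteq S[1/x]$. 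Conversely, let $W$ be a DVR with $S\subsetneq W\subseteq F$; if $x\in\m_W$ then $\m_W\cap S$ is a prime of $S$ containing $x$, so $\sqrt{xS}=\m_S$ forces $\m_W\cap S=\m_S$, making $W$ dominate $S$ and hence every $R_n$ — impossible. Thus $x\in W^\times$, $S[1/x]\subseteq W$, and $S[1/x]\subseteq T$. Hence $T=S[1/x]$; the same computation with any $\m_S$-primary element $y$ gives $T=S[1/y]$, which yields both the description of the units of $T$ and the equality $\dim T=\dim S-1$. Part (5) follows as well: if $N$ is a Noetherian overring with $S\subsetneq N$, a prime of $N$ containing $x$ would lie over $\m_S$ and give a local ring dominating $S$, which is in turn dominated by a DVR strictly containing $S$ in which $x$ is a non-unit — impossible; so $x\in N^\times$ and $T=S[1/x]\subseteq N$.

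It remains to prove (2) and (1). The inclusion $S\subseteq V\cap T$ is immediate, since $V$ dominates $S$ and $T\supseteq S$. The reverse, $V\cap T\subseteq S$, is the content of \cite[Theorem 5.4]{HLOST}; the plan is, given $z\in V\cap T=V\cap S[1/x]$, to write $z=s/x^k$ with $s\in R_i$ for $i\gg 0$ and $k$ minimal, and then to use $\ord_{R_i}(z)\ge 0$ for $i\gg 0$, together with the transform formulas for $s$ and for $x$, to force $k=0$, so that $z\in R_i\subseteq S$. Finally, in (1) the implication $\Leftarrow$ is trivial; for $\Rightarrow$, if $\dim S=1$ then $\m_S=\sqrt{xS}$ is the unique nonzero prime of $S$, so $(0)$ is the only prime of $S[1/x]$, giving $T=S[1/x]=F$; then (2) yields $S=V\cap T=V$, so $S$ is a valuation ring, necessarily of rank $\dim S=1$.
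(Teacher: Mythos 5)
Your outline correctly identifies the architecture of the results this theorem quotes from \cite{HLOST} (the paper itself gives no proof here; it cites [HLOST, Theorems 4.1, 5.4 and 8.1]), but the two facts you yourself flag as ``the main obstacle'' --- that $\sqrt{xS}=\m_S$ for your chosen $x$, and that no DVR properly containing $S$ can dominate $S$ --- are precisely the content of those theorems, and you do not prove them; every other step in your argument is conditional on them, and item (2) is likewise only a plan (``force $k=0$''). Worse, your specific witness $x:=x_1$, i.e.\ the choice $xS=\m_0S$, is wrong in general, so the claim you propose to establish is false as stated. Concretely: let $R_0$ be a $3$-dimensional regular local ring with $\m_0=(x,y,z)R_0$ and let $R_1=R_0[y/x,z/x]_{(x,\,y/x,\,z/x)}$, so that $\m_0R_1=xR_1$ and $x_1=x$. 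Put $P=xR_1$, a nonzero nonmaximal prime of $R_1$. By Corollary~\ref{principal cor} (or Theorem~\ref{3.8}(1), via Propositions~\ref{3.5}, \ref{3.7} and Lemma~\ref{3.6}) there is a quadratic sequence $R_1\subseteq R_2\subseteq\cdots$ contained in $(R_1)_P$; prepending $R_0$ gives a quadratic Shannon extension $S=\bigcup_n R_n$ of $R_0$ with $S\subseteq (R_1)_P$. Then $\p:=P(R_1)_P\cap S$ is a prime of $S$ containing $x$, and $\p\neq\m_S$ because $y/x\in\m_1\subseteq\m_S$ while $y/x\notin P(R_1)_P$. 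Hence $xS=\m_0S$ is not $\m_S$-primary. The existence of a good $x$ (in the non-archimedean case it must avoid the prime $Q$ of Theorem~\ref{overview}, so it cannot in general be found at the first step of the sequence) is exactly what requires the structure theory of \cite{HLOST}.

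Beyond this, some steps you do carry out are not tight. The stabilization of the chain $\{R_m[1/x]\}$ is deduced from the factorization $x=x_{m+1}^{\ord_{R_m}(x)}w$ in $R_{m+1}$, but this only makes $x_{m+1}$ a unit of $R_{m+1}[1/x]$, not of $R_m[1/x]$, so it does not yield $R_{m+1}[1/x]=R_m[1/x]$; identifying $T$ with a localization of $R_i$ for $i\gg0$ needs a separate argument. Your proofs of $S[1/x]\subseteq T$ and of item (5) both invoke the unproved assertion that no DVR (more generally, no Noetherian local overring dominated by a DVR) properly containing $S$ can dominate $S$. As it stands the proposal is a reasonable road map that matches the shape of the cited proofs, but its load-bearing claims are missing and its proposed starting point for $x$ is provably incorrect.
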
 

In light of item 5 of Theorem~\ref{hull}, the ring $T$  is called the {\it Noetherian hull} of $S$.  

\medskip

The boundary valuation ring is given by a valuation from the nonzero elements of the  quotient field of $R$ to a totally ordered abelian group of rank at most $2$ \cite[Theorem 6.4 and Corollary 8.6]{HOT}.   In \cite{HOT} the following two mappings on the quotient field of $R$ are introduced as invariants of a quadratic Shannon extension. The first, $\edeg$, takes values in ${\mathbb{Z}} \cup \{\infty\}$, while the second, $w$, takes values in ${\mathbb{R}} \cup \{-\infty,+\infty\}$.  Both $e$ and $w$ are used in \cite{HOT} to decompose the  boundary valuation $v$ of the quadratic Shannon extension into a function that takes its values in ${\mathbb{R}} \oplus {\mathbb{R}}$ with the lexicographic ordering. The function $\edeg$ is defined in terms of the transform $(aR_n)^{R_{n+i}}$ of a principal ideal $aR_n$ in $R_{n+i}$ for $i > n$; see \cite{Lip} for the general definition of the (weak) transform of an ideal and \cite{HLOST} for more on the properties of the transform in our setting.

\begin{definition} \label{e def} \label{w-function}
Let $S = \bigcup_{i \geq 0} R_i$ be a quadratic Shannon extension of a regular local ring $R$.  
\begin{description}[(2)]
\item[(1)] Let $a \in S$ be nonzero.  
Then $a \in R_n$ for some $n \ge 0$.  
Define $$\displaystyle \edeg (a) = \lim_{i \rightarrow \infty} \ord_{n +i} ((a R_n)^{R_{n+i}}).$$
For   $a, b$  nonzero elements in $ S$, let $n \in \mathbb N$ be 
such that $a, b \in R_n$ and 
 define $\edeg (\frac{a}{b}) = \edeg (a) - \edeg (b)$. That $\edeg$ is well defined is given by \cite[Lemma 5.2]{HOT}.  
 
\item[(2)] 
Fix $x \in S$ such that $xS$ is primary for the maximal ideal of $S$, and define  
$$w:~F \rightarrow {\mathbb{R}} ~  \cup   ~  \{-\infty,~+ \infty \}$$ by   
defining  $w (0) = +\infty$, and for each $q \in F^{\times}$,  
	$$
	w (q) ~ = ~  \lim_{n \rightarrow \infty} \frac{\ord_n (q)}{\ord_n (x)}.
	$$
	\end{description}
\end{definition}

The structure of Shannon extensions naturally separate into those that are archimedean and those that are 
non-archimedean as in the following definition.

\begin{definition} {\rm An integral domain $A$ is {\it archimedean} if $\bigcap_{n>0} a^n A = 0$ for each nonunit $a \in A$.} 
\end{definition}

An integral domain $A$ with $\dim A \le 1$ is archimedean.   

Theorem~\ref{overview}, which characterizes quadratic Shannon extensions in several ways,  shows that there is a prime ideal $Q$ of a non-archimedean quadratic Shannon extension $S$ such that $S/Q$ is a rational rank one valuation ring and $Q$  is  a prime ideal of the Noetherian hull $T$ of $S$. In the next section this fact serves as the basis for the classification of non-archimedean quadratic Shannon extensions via pullbacks.

\begin{theorem}  \label{overview} 
Let $S = \bigcup_{n \geq 0}R_n$ be a quadratic 
	Shannon extension of a regular local ring $R$ with quotient field $F$, and let $x$ be an element of $S$ that is primary for the 
	maximal ideal  $\m_S$ of $S$ (see Theorem~\ref{flat}). 
	Assume  that $\dim S \ge 2$. Let $Q = \bigcap_{n \ge 1}x^nS$, and 
	let $T = S[1/x]$ be the Noetherian hull of $S$. 
	Then the following are equivalent: 
	\begin{description}[(2)]
		\item[{\rm (1)}]  $S$ is non-archimedean. 
		
		\item[{\rm (2)}]   $T = (Q:_FQ)$.  
		
		\item[{\rm (3)}]  $Q$ is a nonzero prime ideal of $S$.  
		
		\item[{\rm (4)}] 
		Every nonmaximal prime ideal  of $S$ is contained in $Q$.
		
		\item[{\rm (5)}] 
		$T$ is a regular local ring. 
		
		\item[{\em (6)}] $\sum_{n=0}^{\infty} w ({\ff m}_n) = \infty$, 
		where $w$ is as in Definition~\ref{w-function} and, 
		for each $n \geq 0$, 
		${\ff m}_n$ is the maximal ideal of $R_n$.

	\end{description}
	Moreover if (1)--(6) hold for $S$ and $Q$, then $T = S_Q$, $Q = QS_Q$   is a common ideal of $S$ and $T$,  and $S/Q$ is   a  
	rational rank 1 valuation domain  on the residue field $T / Q$ of $T$.  In particular, $Q$ is the unique maximal ideal of $T$.  
\end{theorem}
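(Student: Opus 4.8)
\medskip
\noindent\textbf{Proof proposal.}

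\smallskip
\noindent\emph{Step 1: reduce everything to the condition $Q\neq 0$.}
Since $\dim S\ge 2$, the element $x$ is a nonunit of $S$. I would first record the elementary facts $Q\subseteq xS$, $xQ=Q$, $x\notin Q$ (else $1\in x^{n-1}S$), and that $Q$ is an ideal of $T=S[1/x]$ (for $q\in Q$ and $t=s/x^k\in T$ one has $q/x^k\in Q$, so $tq\in Q$); in particular $T\subseteq(Q:_FQ)$. Now $S$ is non-archimedean if and only if $Q\neq 0$: if $a\in\m_S$ is a nonunit with $\bigcap_n a^nS\neq 0$ then $\sqrt{xS}=\m_S$ gives $a^k\in xS$ for some $k$, so $Q\supseteq\bigcap_n a^{nk}S=\bigcap_n a^nS\neq 0$; conversely $x$ itself witnesses non-archimedeanness once $Q\neq 0$. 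It follows at once that $(2)\Rightarrow(1)$ (if $Q=0$ then $(Q:_FQ)=F\neq T$, as $\dim T=\dim S-1\ge 1$), that $(3)\Rightarrow(1)$, and that $(4)\Rightarrow(1)$ (a chain $0\subsetneq\p\subsetneq\m_S$ exists and gives $0\neq\p\subseteq Q$). So the substance is $(1)\Rightarrow(2),(3),(4),(5)$ together with the ``Moreover'' clause, and then $(5)\Rightarrow(1)$ and $(6)\Leftrightarrow(1)$.

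\smallskip
\noindent\emph{Step 2: the rank-two picture.}
By Theorem~\ref{hull}, $S=V\cap T$ with $V$ the boundary valuation ring, which by \cite{HOT} has rank at most $2$, while $T=S[1/x]$ is a Noetherian regular UFD with $\dim T=\dim S-1$. Using $x^n(V\cap T)=x^nV\cap x^nT$ and that $x$ is a unit of $T$, one gets $Q=\bigl(\bigcap_n x^nV\bigr)\cap T$. If $V$ has rank $1$, or rank $2$ with $v(x)$ outside the bottom convex subgroup $\Delta$ of the value group, then $\{n\,v(x)\}$ is cofinal upward, so $\bigcap_n x^nV=0$ and $Q=0$. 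Hence $(1)$ forces $V$ to have rank $2$ with $v(x)\in\Delta_{>0}$, so $\bigcap_n x^nV=P_V$, the unique nonzero nonmaximal prime of $V$, and $Q=P_V\cap T=P_V\cap S$ (the last equality since $Q\subseteq S$). The injection $S/Q\hookrightarrow V/P_V$ into a domain shows $Q$ is prime in $S$, hence---not containing $x$---also prime in $T$; and since $v(x)\in\Delta$, the element $x$ is a unit of $V_{P_V}$, so $T=S[1/x]\subseteq V_{P_V}$, a rank-one valuation overring of $T$. Finally, $Q$ is a nonzero, hence faithful, finitely generated ideal of the Noetherian ring $T$, so every element of $(Q:_FQ)$ is integral over $T$; as the UFD $T$ is integrally closed, $(Q:_FQ)=T$, which is $(2)$.

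\smallskip
\noindent\emph{Step 3: $Q$ is the maximal ideal of $T$ --- the heart of the matter.}
Because $\m_S=\sqrt{xS}$, every prime $P\neq\m_S$ of $S$ has $x\notin P$, so the primes of $S$ other than $\m_S$ are exactly the primes of $T=S[1/x]$; thus $(4)$ says precisely that every prime of $T$ lies in $Q$, i.e.\ that $T$ is local with maximal ideal $Q$, i.e.\ $(5)$ (a Noetherian regular UFD that is local is regular local). The plan for $(1)\Rightarrow(4)$ is: given a prime $P\neq\m_S$ of $S$ and $p\in P$ with $v(p)\in\Delta$, one shows $p$ is $\m_S$-primary --- comparing $v(p)$ with $v(x)\in\Delta_{>0}$, using that $\Delta$ is archimedean, and using the containment $T\subseteq V_{P_V}$ together with the realization of $T$ as a localization of the $R_i$ to place the relevant quotients inside $T$ --- whence $\m_S=\sqrt{pS}\subseteq P$, a contradiction; so $P\subseteq P_V\cap S=Q$. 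Granting this, $T=T_Q=S_Q$ is local with maximal ideal $QS_Q=Q$, giving $(5)$ and ``$Q$ is the unique maximal ideal of $T$''. For the valuation statement: since $T=S_Q$, any $a,b\in S\setminus Q$ are units of $T$ with $v(a),v(b)\in\Delta$, and whichever of $b/a$, $a/b$ has nonnegative value lies in $V\cap T=S$; hence $S/Q$ is a valuation ring, its value group embeds in the rank-one group $\Delta$ (so has rational rank $1$), and its fraction field is $T/Q$, a field because $Q$ is maximal in $T$.

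\smallskip
\noindent\emph{Step 4: the converse implications and $(6)$; the obstacle.}
For $(5)\Rightarrow(1)$ and $(6)\Leftrightarrow(1)$ I would argue by contraposition using the analysis of the archimedean case in \cite{HLOST} and the $w$-function machinery of \cite{HOT}: when $S$ is archimedean the boundary valuation has rank $1$, which forces $T=S[1/x]$ to be nonlocal and $\sum_n w(\m_n)<\infty$; conversely $\sum_n w(\m_n)=\infty$ is precisely the condition under which the boundary valuation acquires its nontrivial second (integer-valued, $\edeg$-) component, equivalently has rank $2$, equivalently is non-archimedean. \textbf{Main obstacle.} The decisive point is Step~3 --- that $T$ is local, equivalently $T=S_Q$, equivalently every nonmaximal prime of $S$ lies in $Q$. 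The boundary valuation governs membership in $V$ but not directly in $T$, so bridging to $T$ genuinely requires the finer structure of $S$ (the inclusion $T\subseteq V_{P_V}$ and the way $T$ arises as a localization of the regular local rings $R_i$, as developed in \cite{HLOST}); this is exactly what keeps $(5)$ from being an automatic consequence of Theorem~\ref{hull}.
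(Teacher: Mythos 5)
Your proposal is not a complete proof, and you have in fact located the gap yourself. The parts you carry out are fine and even somewhat more self-contained than the paper's argument: the reduction of non-archimedean to $Q\neq 0$, the easy implications $(2),(3),(4)\Rightarrow(1)$, the identity $Q=\bigl(\bigcap_n x^nV\bigr)\cap T=P_V\cap S$ from the rank-$\le 2$ structure of the boundary valuation, the determinant-trick proof of $(Q:_FQ)=T$, and the translation of $(4)$ into locality of $T$ via $\operatorname{Spec}(T)\leftrightarrow\operatorname{Spec}(S)\setminus\{\m_S\}$. But the central implication $(1)\Rightarrow(4)$ (equivalently, that $T$ is local with maximal ideal $Q$) is only a ``plan,'' and as sketched it is circular: to show that $p\in\m_S\setminus Q$ is $\m_S$-primary you need $x^m/p\in S=V\cap T$ for some $m$; the $V$-membership does follow from $v(p),v(x)\in\Delta$ and the archimedean property of $\Delta$, but the $T$-membership is equivalent to $p$ being a unit of $T$, i.e.\ to the locality of $T$ that you are trying to prove. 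Membership in $V$ is governed by the boundary valuation, membership in $T$ is not, and nothing in your Steps 2--3 bridges that. Likewise $(5)\Rightarrow(1)$ and $(1)\Leftrightarrow(6)$ are only asserted ``by contraposition using the machinery of \cite{HLOST,HOT},'' with no argument. A smaller but genuine error: from the embedding of the value group of $S/Q$ into the rank-one group $\Delta$ you conclude rational rank $1$; rank $1$ does not imply rational rank $1$ (e.g.\ ${\mathbb Z}+{\mathbb Z}\sqrt2\subseteq{\mathbb R}$), so this too needs the finer result.

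For comparison, the paper does not attempt a self-contained valuation-theoretic argument at all: it quotes the equivalence of (1)--(5) from \cite[Theorem 8.3]{HOT} and of (1) with (6) from \cite[Theorem 6.1]{HOT}, and assembles the ``moreover'' clause from \cite[Theorem 8.1, Remark 8.2, Corollary 8.4, Theorem 8.5]{HOT}, where the prime $Q_\infty=\{a\in S\mid w(a)=+\infty\}$ defined by the $w$-function is shown to be the unique dimension-one prime of $S$ and an ideal of $T$; the asymptotic analysis of the functions $w$ and $\edeg$ is precisely the bridge between $S$, $T$ and the boundary valuation that your sketch is missing. If you want a proof in the spirit of your Steps 2--3, you would need to import (or reprove) that $w$-theoretic input rather than argue from the rank-two picture of $V$ alone.
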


\begin{proof} The equivalence of items 1 through 5 can be found in \cite[Theorem 8.3]{HOT}. That statement 1 is equivalent to 6 follows 
	from  \cite[Theorem 6.1]{HOT}. 
	To prove the moreover  statement, 
	define $Q_\infty = \{ a \in S \mid w (a) = + \infty \},$ where $w$ is as in Definition~\ref{w-function}. 
	By \cite[Theorem~8.1]{HOT}, $Q_\infty$ is a prime ideal of $S$ and $T$, and
	by  \cite[Remark~8.2]{HOT}, $Q_\infty$ 
	is the unique prime ideal of $S$ of dimension 1.  Since also item 4 implies every nonmaximal prime ideal of $S$ is contained in $Q$, it follows that $Q = Q_{\infty}$.  By item  5, $T = S[1/x]$ is a local ring. Since $xS$ is ${\ff m}_S$-primary, we have that $T = S_Q$.   Since $Q$ is an ideal of $T$, we conclude that $QS_Q = Q$ and $Q$ is  the unique maximal ideal of $T$.  By \cite[Corollary 8.4]{HOT}, $S/Q$ is a valuation domain, and by  \cite[Theorem~8.5]{HOT}, $S/Q$ has rational rank 1. 
 \qed
\end{proof}

We can further separate the case where $S$ is archimedean to whether or not $S$ is completely integrally closed.
We recall the definition and result.

\begin{definition} \label{2.5}
 {\rm Let $A$ be an integral domain. An element $x$ in the field of fractions of $A$ is called {\it almost integral} over $A$ if $A [x]$ is contained
  in a principal fractional ideal of $A$.
    The ring $A$ is called {\it completely integrally closed} if it contains all of the almost integral elements over it.} 
\end{definition}  

\begin{theorem} $\phantom{}$ \hspace{-.09in}  {\rm \cite[Theorems 6.1, 6.2]{HLOST}}    \label{complete intcl} 
  Let $S$ be an archimedean quadratic Shannon extension.
  Then the function $w$ as in Definition~\ref{w-function} is a rank $1$ nondiscrete valuation.
  Its valuation ring $W$ is the rank $1$ valuation overring of $V$ and $W$ also dominates $S$.
  The following are equivalent:
  \begin{description}
    \item[{\rm (1)}] $S$ is completely integrally closed.
    \item[{\rm (2)}] The boundary valuation $V$ has rank $1$; that is, $V = W$.
  \end{description}
\end{theorem}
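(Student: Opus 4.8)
The plan is to first pin down the valuation $w$, its valuation ring $W$, and the position of $W$ relative to the boundary valuation ring $V$ and the Noetherian hull $T$, and then to reduce the equivalence to the single identity $S^{\ast} = W \cap T$, where $S^{\ast}$ denotes the complete integral closure of $S$. Throughout I fix, via Theorem~\ref{hull}, an element $x \in \m_S$ with $xS$ being $\m_S$-primary, and set $T = S[1/x]$; I may assume $\dim S \ge 2$, since otherwise $S$ is a rank $1$ valuation ring by Theorem~\ref{hull}(1), so $S = V = W$ and both conditions hold.

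First I would verify that $w$ is a rank $1$ nondiscrete valuation whose ring $W$ dominates $S$ and is the rank $1$ overring of $V$. Additivity of $w$ and its ultrametric inequality pass to the limit from the corresponding identities for the discrete valuations $\ord_n$ after dividing by the positive integers $\ord_n(x)$. To see $w(q) \in \mathbb{R}$ for $q \in F^{\times}$, note that since $S$ is archimedean, $\bigcap_n x^nS = 0$ by Theorem~\ref{overview}, and this intersection is identified in the proof of that theorem with $\{a \in S : w(a) = +\infty\}$; writing $q$ as a ratio of nonzero elements of $S$ then forces $w(q)$ finite. So $w$ is a valuation whose value group is a nonzero (as $w(x) = 1$) subgroup of $\mathbb{R}$, and $W$ has rank $1$. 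It is nondiscrete because every regular parameter of $R_n$ lies in $\m_n \subseteq \m_S = \sqrt{xS}$ and so has positive $w$-value, whence $w(\m_n) > 0$, while the archimedean hypothesis gives $\sum_n w(\m_n) < \infty$ by Theorem~\ref{overview}(6), forcing $w(\m_n) \to 0$; a subgroup of $\mathbb{R}$ with arbitrarily small positive elements is dense. For domination: $S \subseteq W$ because an element of $S$ lies in some $R_m$, hence has nonnegative $\ord_i$ for $i \ge m$; and $\m_W \cap S$ is a prime of $S$ containing $x$ (since $w(x) = 1 > 0$), hence equals $\m_S$ as $xS$ is $\m_S$-primary. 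Finally $V \subseteq W$, since $a \in V$ gives $\ord_i(a) \ge 0$ for $i \gg 0$ and thus $w(a) \ge 0$; the overrings of $V$ form a chain whose value groups are the quotients of the value group of $v$ by its convex subgroups, exactly one of which has rank $1$, so $W$ is the unique rank $1$ overring of $V$.

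Next I would prove the identity $S^{\ast} = W \cap T$. For ``$\subseteq$'': since $S = V \cap T$ by Theorem~\ref{hull}(2), an element $q$ almost integral over $S$ is almost integral over both $T$ and $V$; it lies in $T$ because $T$ is a regular UFD by Theorem~\ref{hull}(4), hence Krull, hence completely integrally closed; and it lies in $W$ because, if $v(q) < 0$, an almost-integrality witness shows that $v(q)$ lies in a proper convex subgroup of the value group of $v$, hence in the largest one, $\Delta$, so that $w(q) = 0$. For ``$\supseteq$'': given $q \in W \cap T$ we have $w(q) \ge 0$, so $v(q) \ge 0$ or $v(q) \in \Delta$; since $w(x) = 1 > 0$, the element $v(x)$ strictly dominates $\Delta$, so $v(xq^n) = v(x) + n\,v(q) \ge 0$ for all $n \ge 0$, i.e.\ $xq^n \in V$, while $xq^n \in T$ as $x, q \in T$; hence $xq^n \in V \cap T = S$ for all $n$, so $q$ is almost integral over $S$. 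Thus $S^{\ast} = W \cap T$, and since $S = V \cap T$, $S$ is completely integrally closed if and only if $V \cap T = W \cap T$. Now (2) $\Rightarrow$ (1) is immediate: if $V = W$ then $V$ is a rank $1$ valuation ring, hence completely integrally closed, $T$ is completely integrally closed, and so is their intersection $S$. For (1) $\Rightarrow$ (2) it remains to show that if $V \ne W$ --- equivalently $\operatorname{rank} V \ge 2$, as $V \subseteq W$ with $W$ of rank $1$ --- then $(W \setminus V) \cap T \ne \emptyset$, so that $V \cap T \ne W \cap T$ and $S$ fails to be completely integrally closed.

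The hard part will be exactly this last assertion: that $T$ must contain an element on which $w$ vanishes while $v$ is negative, whenever $\operatorname{rank} V \ge 2$. The obvious candidate $1/b$, for $b$ an $\m_S$-primary element, does not work --- every element of $\m_S$, in particular every $\m_S$-primary element, lies in $\m_W$ by the domination above, so has \emph{strictly} positive $w$-value, and hence $1/b \notin W$. I would instead appeal to the structure theory of \cite{HOT}, which realizes the boundary valuation $v$ with values in a lexicographically ordered $\mathbb{R} \oplus \mathbb{R}$ whose first coordinate is $w$: when $\operatorname{rank} V \ge 2$ the second coordinate is nontrivial, and using the companion invariant $\edeg$ one should be able to exhibit two $\m_S$-primary elements with the same $w$-value but different $v$-value, whose ratio is then a unit of $T$ lying in $W \setminus V$. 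Extracting from the description of $v$ --- equivalently of $\edeg$ --- on $\m_S$-primary elements the information needed to guarantee such a pair exists is where the genuine work lies; everything else is bookkeeping with the results quoted above.
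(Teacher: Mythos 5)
The paper itself offers no argument for this statement --- it is quoted verbatim from \cite[Theorems 6.1, 6.2]{HLOST} --- so the only question is whether your proof is complete, and it is not. Your preliminary work is sound: the verification that $w$ is a valuation with real values (finiteness on $F^{\times}$ does need the extra step that $w(a)\ge k$ forces $a\in x^kS$, which uses $S=V\cap T$ from Theorem~\ref{hull}, not just the definition of archimedean), the identification of $W$ as the unique rank~$1$ overring of $V$ dominating $S$, nondiscreteness via $\sum_n w(\m_n)<\infty$ and $w(\m_n)>0$, and the identity $S^{\ast}=W\cap T$ for the complete integral closure (which is in substance exactly what the cited theorems of \cite{HLOST} assert) are all correctly argued, and (2)$\Rightarrow$(1) follows as you say.

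The genuine gap is the implication (1)$\Rightarrow$(2), which after your reduction reads: if $\operatorname{rank} V=2$ then $T\cap(W\setminus V)\neq\emptyset$. You do not prove this; you only propose to find two $\m_S$-primary elements with equal $w$-value but distinct $v$-value and concede that ``where the genuine work lies'' is exactly here. Nothing quoted in the present paper supplies this: Theorem~\ref{nonarch valuation} decomposes $v$ only in the non-archimedean case, and the archimedean decomposition \cite[Theorem 6.4]{HOT} relating the second component of $v$ to $\edeg$ on units of $T$ is mentioned but not stated. Without an argument that the residual (second) component of $v$ is nontrivial on the units of $T$ --- equivalently, that $v$ is not determined by $w$ on $\m_S$-primary elements whenever $\Delta\neq 0$ --- the possibility that $V\cap T=W\cap T$ while $V\subsetneq W$ is not excluded, and this is precisely the substantive content of the cited \cite[Theorem 6.2]{HLOST}. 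So the proposal proves the easy direction and the structural identity, but the crux of the equivalence remains an unproven plan.
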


In Theorem~\ref{nonarch valuation} we recall from \cite{HOT} the decomposition of the boundary valuation of a non-archimedean quadratic Shannon extension in terms of the functions $w$ and $\edeg$ in Definition~\ref{w-function}. For a  decomposition of the boundary valuation in terms of $w$ and $\edeg$ in the archimedean case, see \cite[Theorem 6.4]{HOT}.

\begin{theorem} $\phantom{}$ \hspace{-.09in}  {\rm \cite[Theorem 8.5 and Corollary 8.6]{HOT}}
 \label{nonarch valuation}
Assume that   $S$ is a non-arch\-i\-med\-e\-an quadratic 
	Shannon extension of a regular local ring $R$ with quotient field $F$. 
	 Let $Q$ be as in Theorem~\ref{overview}, and 
	let $e$ and $w$ be as in Definition~\ref{w-function}. Then:
\begin{description}[(2)]
		\item[{\em (1)}]
		
		$\edeg$ is a rank 1 valuation on $F$ whose valuation ring $E$ contains $V$. If in addition $R / (Q \cap R)$ is a regular local ring, then $E$ is the order valuation ring of $T$.
		
		\item[{\em (2)}]
		
		$w$ induces a rational rank $1$ valuation $w'$ on the residue field $E/\mathfrak  m_E$  of $E$. The valuation ring  $ W'$  defined by $w'$
		extends the valuation ring $S / Q$,
		and the value group of $W'$ is the same as the value group of $S / Q$.
		
		\item[{\em (3)}]
		$V$ is the valuation ring defined by the composite valuation of $\edeg$ and $w'$. 
		
		\item[{\em (4)}] Let $z \in E$ such that ${\ff m}_E = zE$. Then $V$ is defined by the valuation $v$ given by 
		$$v:F \setminus \{0\} \rightarrow {\mathbb{Z}} \oplus {\mathbb{Q}}:a \mapsto \left(\frac{\edeg(a)}{\edeg(z)}, \frac{w(a)\edeg(z)}{w(z)\edeg(a)}\right),$$
where the direct sum is ordered lexicographically. 		
	\end{description}
	\end{theorem}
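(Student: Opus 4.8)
The plan is to read the structure of the boundary valuation $V$ off the pullback description of $S$ supplied by Theorem~\ref{overview}, and then to match its two layers with $\edeg$ and $w$. Since $S$ is non-archimedean, Theorem~\ref{overview} gives that $T=S_Q$ is a regular local ring whose maximal ideal $Q$ is a common ideal of $S$ and $T$, that $T/Q$ is the fraction field of $S/Q$, and that $S/Q$ is a rational rank one valuation ring of $T/Q$. I would first observe that $V$ has rank exactly $2$: by \cite[Theorem~6.4 and Corollary~8.6]{HOT} its value group has rank at most $2$, and it cannot have rank $1$ because a rank one valuation ring has Krull dimension $1$, hence is archimedean, so that $\bigcap_n x^nV=0$, whereas $0\neq Q=\bigcap_n x^nS\subseteq\bigcap_n x^nV$. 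Therefore $V$ has a unique nonzero nonmaximal prime $\mathfrak q$, the overring $E:=V_{\mathfrak q}$ is a rank one valuation ring of $F$, the ring $V/\mathfrak q$ is a rank one valuation ring of the residue field $\kappa(\mathfrak q)=E/\m_E$, and $V$ is the composite of $E$ with $V/\mathfrak q$. It remains to show that $\edeg$ is a valuation with valuation ring $E$, and that $w$ induces on $\kappa(\mathfrak q)$ a valuation with valuation ring $V/\mathfrak q$.

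For item~(1), I would first check that $\edeg$ is a valuation. Additivity, $\edeg(ab)=\edeg(a)+\edeg(b)$, follows from the multiplicativity of the transform on principal ideals, $(abR_n)^{R_{n+i}}=(aR_n)^{R_{n+i}}(bR_n)^{R_{n+i}}$ for $i\gg 0$ (see \cite{Lip} and the treatment of transforms in \cite{HLOST}), together with additivity of $\ord_{R_{n+i}}$ on products. For the ultrametric inequality I would use that $\m_nR_{n+i}=tR_{n+i}$ is principal, with the same $t$ for all $i\geq 1$, so that $(aR_n)^{R_{n+i}}=a\,t^{-e_i(a)}R_{n+i}$ with $e_i$ the $t$-adic valuation of $R_{n+i}$; this gives
\[
\edeg(a)=\lim_{i\to\infty}\bigl(\ord_{R_{n+i}}(a)-e_i(a)\,\ord_{R_{n+i}}(t)\bigr)\qquad(a\in F^\times),
\]
and one must estimate the right-hand side for $a+b$. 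This estimate is the technical core and is essentially \cite[Theorem~8.5]{HOT}. Granting it, $\edeg$ is a valuation with value group contained in $\mathbb Z$, so its valuation ring $E_0$ has rank one. The containment $V\subseteq E_0$ follows from the same formula: for $a\in V$ each summand on the right is $\geq 0$, because $\ord_{R_{n+i}}(a)\geq e_i(a)\,\ord_{R_{n+i}}(t)$ by definition of $e_i$, whence $\edeg(a)\geq 0$. Since $E_0$ is then a rank one overring of the rank two valuation ring $V$, necessarily $E_0=V_{\mathfrak q}=E$. Finally, when $R/(Q\cap R)$ is regular local, the transforms $(aR_n)^{R_{n+i}}$ stabilize to a principal ideal of $T$ whose generator has $\ord_T$-value equal to $\edeg(a)$, which together with $\m_E\cap T=Q=\m_T$ identifies $E$ with the order valuation ring of $T$; I would invoke \cite[Theorem~8.5]{HOT} for this refinement.

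For items~(2) and~(3), recall from the proof of Theorem~\ref{overview} that $Q=\{a\in S:w(a)=+\infty\}$ is the unique one-dimensional prime of $S$; one checks that $\m_E\cap S=Q$, for instance because $S/Q$ must embed into the field $E/\m_E$ in such a way that $w$ restricts to a finite-valued valuation on nonzero residues. Hence $w$ descends modulo $Q$ to the valuation $S/Q$ of $T/Q$, which has rational rank one by Theorem~\ref{overview}. I would then show that this descends further to a well-defined function $w'$ on $E/\m_E=\kappa(\mathfrak q)$ (well-definedness holds because two $E$-units congruent modulo $\m_E$ have the same $w$-value), that $w'$ is a valuation extending $S/Q$, and that the value group of $w'$ equals that of $S/Q$, the last point because modulo the values of $S/Q$ every element of $\kappa(\mathfrak q)$ is already represented by a ratio of elements of $S$; this is again where I would follow \cite[Theorem~8.5 and Corollary~8.6]{HOT}. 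The valuation ring $W'$ of $w'$ is then exactly $V/\mathfrak q$, because $w$ detects $\m_V$ among the $E$-units. Item~(3) is now immediate: the composite valuation of $\edeg$ (with ring $E$) and $w'$ (with ring $W'=V/\mathfrak q$) has valuation ring the preimage of $W'$ under $E\twoheadrightarrow E/\m_E$, which by items~(1) and~(2) is $V$.

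For item~(4), once $E$, $w'$ and the composition have been identified, the displayed formula for $v$ is the standard normalized description of a rank two composite valuation: with $z$ chosen so that $\m_E=zE$, one normalizes the first coordinate to $\edeg(a)/\edeg(z)\in\mathbb Z$ and applies $w'$ to the residue of the $E$-unit $a\,z^{-\edeg(a)/\edeg(z)}$; carrying this out and rescaling gives the stated map into $\mathbb Z\oplus\mathbb Q$. I expect the principal obstacle to be the verification that $\edeg$ satisfies the ultrametric inequality (that it is genuinely a valuation) together with the matching step in item~(2) that the residue valuation induced by $w$ on $\kappa(\mathfrak q)$ has the same value group as $S/Q$; both hinge on controlling orders and $t$-adic values of elements along the quadratic sequence, which is the content imported from \cite[Theorem~8.5 and Corollary~8.6]{HOT}.
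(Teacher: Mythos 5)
The paper does not prove this statement at all: Theorem~\ref{nonarch valuation} is recalled verbatim from \cite[Theorem 8.5 and Corollary 8.6]{HOT}, and your write-up ultimately rests on that same citation for the two decisive points (that $\edeg$ satisfies the ultrametric inequality, hence is a valuation with $V\subseteq E$, and that the residue valuation $w'$ has the same value group as $S/Q$), so your outline is consistent with the paper's treatment rather than an alternative to it. The surrounding scaffolding you add (rank of $V$ is exactly $2$ because $0\neq Q\subseteq\bigcap_n x^nV$, identification of $E$ with $V_{\mathfrak q}$, and the composite description giving items (3) and (4)) is sound; the one inaccuracy is your intermediate formula treating the iterated transform $(aR_n)^{R_{n+i}}$ as division by a power of a single element $t$ with exponent a $t$-adic value --- the exceptional generator changes at each quadratic transform, so the factor removed is a product of powers of the successive $x_{n+j}$ --- but since you defer the actual verification to \cite{HOT}, this does not affect the structure of the argument.
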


\section{The relation of Shannon extensions to pullbacks}  \label{section 3}

Let $\alpha:A \rightarrow C$ be an extension of rings, and let $B$ be a subring of $C$. The subring  $D = \alpha^{-1}(B)$ of $A$ is the {\it pullback} of $B$ along $\alpha:A \rightarrow C$.  
\begin{center}
\begin{tikzcd}
   D\arrow[rightarrow]{r}\arrow[hookrightarrow]{d} 
  & B\arrow[hookrightarrow]{d}
  \\ 
   A\arrow{r}{\alpha} 
 &  C
\end{tikzcd}
\end{center}
Alternatively, $D$ is the fiber product  $A \times_{C} B$ of  $\alpha$ and the inclusion map $\iota:~B \rightarrow~C$;  see, for example,  \cite[page~43]{Kunz}.


The pullback construction has been extensively studied in multiplicative ideal theory, where it  serves as a   source of examples and generalizes  the classical ``$D+M$'' construction. (For more on the latter construction, see \cite{Gil}.)  We will be especially interested in the case in which $A,B,C,D$ are domains, $\alpha$ is a surjection and $B$ has quotient field $C$.  In this case, following \cite{Gabelli-Houston}, we say the diagram above is of type $\square^*$.  For a diagram of type $\square^*$, the kernel of $\alpha$ is a maximal ideal of $A$ that is contained in $D$. The quotient field $C$ of $B$  can be identified with the residue field of this maximal ideal. 
 If  $A$ is local with 
 $\dim A \ge 1$ and $\dim B \ge 1$, then   $A = D_M$  is  a localization of $D$ and  $D$ is non-archimedean. These observations  have a number of 
 consequences for transfer properties between the ring $D$ and the rings $A$ and $B$; see for example \cite{Fon, FHP, Gabelli-Houston}. 

While pullback diagrams of type $\square^*$ are often used to construct examples in non-Noetherian commutative ring theory, there are also instances where the pullback construction is used as a classification tool.  A simple example is given by the observation that a local domain $D$ has a principal maximal ideal if and only if $D$ occurs in a pullback diagram of type $\square^*$, where $B$ is a DVR \cite[Exercise~1.5, p.~7]{Kap}.   A second example is given by the fact that for nonnegative integers $k< n$,  a ring $D$ is a valuation domain of rank $n$ if and only if $D$ occurs in pullback diagram of type $\square^*$, where $A$ is a valuation ring of rank $n-k$ and $B$ is a valuation ring of  rank $k$; see \cite[Theorem 2.4]{Fon}.   
Theorem~\ref{overview} 
provides an instance of this decomposition in the present context. In the theorem, $V$ is the pullback of $E$ and $W'$: 
\begin{center}
\begin{tikzcd}
   V = \alpha^{-1}(W')\arrow[twoheadrightarrow]{r}\arrow[hookrightarrow]{d} 
  & W'\arrow[hookrightarrow]{d}
  \\ 
   E\arrow[twoheadrightarrow]{r}{\alpha} 
 &  E/{\ff m}_E
\end{tikzcd}
\end{center}
	
	A third example classification via pullbacks of the form $\square^*$ is given by the classification of local rings of global dimension $2$ by Greenberg \cite[Corollary 3.7]{Gre} and Vasoncelos \cite{Vas}:  A local ring $D$ has global dimension $2$ if and only if $D$ satisfies one of the following:
	\begin{description}[(a)]
	\item[(a)] $D$ is a regular local ring of Krull dimension $2$, 
	\item[(b)] $D$ is a valuation ring of global dimension $2$, or 
	\item[(c)] $D$ has countably many principal prime ideals and $D$ occurs in a pullback diagram of type $\square^*$, where $A$ is  a valuation ring of global dimension 1 or 2 and $B$ is a regular local  ring of global dimension  $2$.  
	\end{description}

		Motivated by these examples, we use the pullback construction in this and the next section to classify among the overrings of a regular local ring $R$ those that are non-archimedean quadratic Shannon extensions of $R$.   We prove in Theorem~\ref{pullback thm} that these are precisely the overrings of $R$ that occur in pullback diagrams of type~$\square^*$, where $A$ is a localization of an iterated quadratic transform $R_i$ of $R$ at a prime ideal $P$ and 
	 and $B$ is a rank $1$ valuation overring of $R_i/P$ having a divergent multiplicity sequence. Thus a non-archimedean quadratic Shannon extension is 
	 determined by  a rank 1 valuation ring and a regular local ring.

	As a step towards this classification, in Theorem~\ref{pullbacks} we restate  part of Theorem~\ref{overview} as an assertion about how a non-archimedean quadratic Shannon extension can be decomposed using pullbacks. 
 Much of the rest of 
 this section and the next is devoted to a converse of this assertion, which is given in Theorems~\ref{pullback thm} and~\ref{pullback cor}.

\begin{theorem}  \label{pullbacks}   Let $ S$ be a non-archimedean quadratic Shannon extension. Then there is a prime ideal $P$ of $S$ and a rational rank $1$ valuation ring ${\cal V}$ of $\kappa(P)$ such that 
 $S_P$ is the Noetherian hull of $S$ and
 $S$ is the pullback of ${\cal V}$ along the residue map $\alpha:S_P \rightarrow \kappa(P)$, as in the following diagram:
\begin{center}
\begin{tikzcd}
   S = \alpha^{-1}({\cal V})\arrow[twoheadrightarrow]{r}\arrow[hookrightarrow]{d} 
  & {\cal V} \arrow[hookrightarrow]{d}
  \\ 
   S_P\arrow[twoheadrightarrow]{r}{\alpha} 
 &  \kappa(P)
\end{tikzcd}
\end{center}

\end{theorem}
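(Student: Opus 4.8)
The plan is to read the statement off the ``moreover'' part of Theorem~\ref{overview}. Fix $x \in S$ with $xS$ primary for $\m_S$ (such $x$ exists by Theorem~\ref{hull}), set $Q = \bigcap_{n \ge 1} x^n S$ and $T = S[1/x]$, and take the prime ideal in the statement to be $P := Q$. Since a domain of Krull dimension at most $1$ is archimedean, the hypothesis that $S$ is non-archimedean forces $\dim S \ge 2$, so Theorem~\ref{overview} applies. By the equivalence of (1) and (3) there, $Q$ is a nonzero prime ideal of $S$, and the ``moreover'' clause yields $T = S_Q = S_P$ (so $S_P$ is the Noetherian hull of $S$), that $Q = QS_Q$ is a common ideal of $S$ and $T$ with $Q$ the maximal ideal of $T$, and that $S/Q$ is a rational rank $1$ valuation domain whose quotient field is the residue field $T/Q$ of $T$.

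Next I would assemble the pullback square. Because $S_P = T$ and $PS_P = QS_Q = Q$, the residue field is $\kappa(P) = S_P/PS_P = T/Q$ and the residue map $\alpha : S_P \to \kappa(P)$ is the canonical surjection $T \to T/Q$. Put $\mathcal{V} := S/Q$, which by the previous paragraph is a rational rank $1$ valuation ring of $\kappa(P)$. To finish, it suffices to prove $S = \alpha^{-1}(\mathcal{V})$. One inclusion is immediate: for $s \in S$ we have $\alpha(s) = s + Q \in S/Q = \mathcal{V}$. For the reverse inclusion, let $t \in T$ with $\alpha(t) \in \mathcal{V}$, and choose $s \in S$ with $t + Q = s + Q$; then $t - s \in Q$, and since $Q$ is an ideal of $S$ this gives $t = s + (t - s) \in S$. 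Thus $S = \alpha^{-1}(\mathcal{V})$, and the diagram displayed in the statement is exactly this pullback.

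The argument involves essentially no obstacle beyond invoking Theorem~\ref{overview}; the one place requiring a short verification is the equality $S = \alpha^{-1}(\mathcal{V})$, and that rests entirely on $Q$ being a common ideal of $S$ and of its localization $T = S_P$, which is part of the conclusion of Theorem~\ref{overview}. It is worth noting explicitly that saying $\mathcal{V}$ is a valuation ring ``of $\kappa(P)$'' means its quotient field equals $\kappa(P) = T/Q$, which is precisely the assertion that $S/Q$ is a valuation domain on the residue field of $T$.
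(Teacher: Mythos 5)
Your proposal is correct and follows the paper's own route: the paper likewise cites Theorem~\ref{overview} (with $P=Q$) to get $S_P$ as the Noetherian hull, $P=PS_P$, and $S/P$ a rational rank $1$ valuation ring, and then declares the pullback statement to follow; you merely spell out the verification $S=\alpha^{-1}(\mathcal{V})$ and the observation that non-archimedean forces $\dim S\ge 2$, both of which the paper leaves implicit.
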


\begin{proof}   Theorem~\ref{overview}  implies that there  is a prime ideal $P$ of $S$ such that $ S_P$ is the Noetherian hull of $S$, $P = PS_P$ and  $S/P$ is  a  rational rank 1 valuation ring. 
Theorem~\ref{pullbacks}
  follows from these observations.  \qed
\end{proof}

\begin{definition} Let $R$ be a Noetherian local domain, let  ${\mathcal{V}}$ be a rank $1$ valuation ring dominating $R$ with corresponding 
valuation $\nu$, and let $\{ (R_i,\m_i ) \}_{i=0}^\infty$ be the infinite sequence\footnote{If $R_n = R_{n+1}$ for some integer $n$, then $R_n = \mathcal V$ is a 
DVR and $R_n = R_m$ for all $m \ge n$.}
 of LQTs along $\mathcal{V}$.  Then the sequence $\{\nu(\m_i)\}_{i=0}^\infty$ is the {\it multiplicity sequence of $(R,{\mathcal{V}})$}; see \cite[Section 5]{GMR}.
	We say the multiplicity sequence is \emph{divergent} if $\sum_{i \ge 0} \nu (\m_i) = \infty$.
\end{definition}


\begin{remark} \label{GMR remark} Let $R$ be  a regular local ring and let  ${\mathcal{V}}$ be a rank $1$ valuation ring birationally dominating $R$.
	If the multiplicity sequence of $(R, \mathcal{V})$ is divergent, then ${\mathcal{V}}$ is a quadratic Shannon extension of $R$ \cite[Proposition 23]{GMR} and ${\mathcal{V}}$ has rational rank~$1$ \cite[Proposition 7.3]{HKT2}.  This is observed in \cite[Corollary~3.9]{HLOST} in the case where $\mathcal{V}$ is a DVR.	
	In Proposition~\ref{3.7}, we observe that $\mathcal{V}$ is the union of the rings in the LQT sequence of $R$ 
	along $\mathcal{V}$ for every Noetherian local domain $(R,\m)$ birationally dominated by $\mathcal {V}$.
\end{remark}

\begin{proposition}\label{3.7}
  Let $(R,\m)$ be a Noetherian local domain, let $\mathcal{V}$ be a rank $1$ valuation ring that birationally dominates $R$,
  and let $\{ R_i \}_{i = 0}^{\infty}$ be the infinite sequence of LQTs of $R$ along $\mathcal{V}$.
  If the multiplicity sequence of $(R, \mathcal{V})$ is divergent,
  then $\mathcal{V} = \bigcup_{n \ge 0} R_n$.
  In particular, if $\mathcal{V}$ is a DVR, then $\mathcal{V} = \bigcup_{n \ge 0} R_n$.
\end{proposition}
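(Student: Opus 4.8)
The plan is to prove the two inclusions $\bigcup_{n\ge 0}R_n\subseteq\mathcal V$ and $\mathcal V\subseteq\bigcup_{n\ge 0}R_n$ separately. The first is automatic: by construction $\mathcal V$ birationally dominates each $R_n$, so $R_n\subseteq\mathcal V$ for all $n$. For the reverse inclusion I would fix a nonzero $q\in\mathcal V$ and write $q=a_0/b_0$ with $a_0,b_0\in R_0$; since $\nu(q)\ge 0$ this gives $\nu(a_0)\ge\nu(b_0)$. The strategy is to rewrite the fraction $q=a_n/b_n$ as we pass from $R_n$ to $R_{n+1}$, decreasing $\nu(b_n)$ by $\nu(\m_n)$ at each stage, until $\nu(b_n)=0$, at which point $b_n$ is a unit of $R_n$ and hence $q\in R_n$.

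For the inductive step I would use the description of an LQT along $\mathcal V$: choose $x_n\in\m_n$ with $\m_n\mathcal V=x_n\mathcal V$, so that $\nu(x_n)=\nu(\m_n)$ and $\m_nR_{n+1}=x_nR_{n+1}$, and recall $\m_n=\m_{\mathcal V}\cap R_n$. Suppose $a_n,b_n\in R_n$ satisfy $q=a_n/b_n$ with $\nu(b_n)>0$. Then $b_n\in\m_{\mathcal V}\cap R_n=\m_n$, and likewise $a_n\in\m_n$ since $\nu(a_n)\ge\nu(b_n)>0$; hence $a_{n+1}:=a_n/x_n$ and $b_{n+1}:=b_n/x_n$ both lie in $R_{n+1}$, still satisfy $q=a_{n+1}/b_{n+1}$, and obey $\nu(b_{n+1})=\nu(b_n)-\nu(\m_n)$. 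Iterating, so long as $\nu(b_n)>0$ we get $\nu(b_n)=\nu(b_0)-\sum_{i=0}^{n-1}\nu(\m_i)$.

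The crux is termination, and this is precisely where the divergence hypothesis is used. Since $b_n\in R_n\subseteq\mathcal V$, we have $\nu(b_n)\ge 0$ whenever the construction is defined; if it never stopped we would have $\nu(b_0)\ge\sum_{i=0}^{n-1}\nu(\m_i)$ for all $n$, contradicting $\sum_{i\ge 0}\nu(\m_i)=\infty$. So there is an $n$ with $\nu(b_n)=0$; for that $n$, $b_n\notin\m_{\mathcal V}\cap R_n=\m_n$, so $b_n$ is a unit of the local ring $R_n$ and $q=a_nb_n^{-1}\in R_n\subseteq\bigcup_{m\ge 0}R_m$. This gives $\mathcal V\subseteq\bigcup_{n\ge 0}R_n$, and with the first inclusion yields $\mathcal V=\bigcup_{n\ge 0}R_n$. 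For the ``in particular'' clause I would note that if $\mathcal V$ is a DVR with normalized valuation $\nu$, then $\nu(\m_i)\ge 1$ for every $i$ (each $\m_i$ is nonzero), so the multiplicity sequence of $(R,\mathcal V)$ is automatically divergent and the conclusion follows from what has just been shown.

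I do not anticipate a serious obstacle. The only delicate point is verifying that the rewritten fraction genuinely lies in $R_{n+1}$, which rests on forcing both numerator and denominator into $\m_n$ once $\nu(b_n)>0$ together with the principality of $\m_nR_{n+1}$; and the divergence hypothesis is needed only to guarantee that $\nu(b_n)$ reaches $0$ after finitely many steps, which is also the reason the statement genuinely requires that hypothesis when $\mathcal V$ is not discrete.
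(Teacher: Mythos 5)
Your argument is correct and follows essentially the same route as the paper's proof: rewrite $q=a_n/b_n$ by dividing numerator and denominator by a generator $x_n$ of $\m_nR_{n+1}$, observe $\nu(b_{n+1})=\nu(b_n)-\nu(\m_n)$, and use divergence of $\sum_n\nu(\m_n)$ to force $\nu(b_n)=0$ after finitely many steps, so $b_n$ is a unit of $R_n$. Your handling of the DVR case (normalized values give $\nu(\m_i)\ge 1$, hence automatic divergence) is also the intended reduction.
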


\begin{proof}
	Let $\nu$ be a valuation for $\mathcal{V}$ and let $y$ be a nonzero element in $\mathcal{V}$.
	Suppose we have an expression $y = a_n/b_n$, where $a_n, b_n \in R_n$. 	Since $R_n \subseteq \mathcal{V}$, it follows that $\nu (b_n) \ge 0$. If $\nu (b_n) = 0$, then since 
	 $\mathcal{V}$ dominates $R_n$, we have  $1/b_n \in R_n$ and $y \in R_n$.
	
	Assume otherwise, that is, $\nu (b_n) > 0$. Then $b_n \in \m_n$, and since $\nu (a_n) \ge \nu (b_n)$, also $a_n \in \m_n$.
	Let $x_n \in \m_n$ be such that $x_n R_{n+1} = \m_n R_{n+1}$.
	Then $a_n, b_n \in x_n R_{n+1}$, so the elements $a_{n+1} = a_n / x$ and $b_{n+1} = b_n / x$ are in $R_{n+1}$.
	Thus we have the expression $y = a_{n+1}/b_{n+1}$, where $\nu (b_{n+1}) = \nu (b_{n}) - \nu (\m_n)$.
	
	Consider an expression $y = a_0/b_0$, where $a_0, b_0 \in R_0$.
	Then we iterate this process to obtain a sequence of expressions $\{ a_n/b_n \}$ of $y$, with $a_n, b_n \in R_n$, where this process halts at some $n \ge 0$ if $\nu (b_n) = 0$, implying $y \in R_n$.
	Assume by way of contradiction that this sequence is infinite.
	For $N \ge 0$, it follows that $\nu (b_0) = \nu (b_N) + \sum_{n = 0}^{N-1} \nu (\m_n)$.
	Then $\nu (b_0) \ge \sum_{n = 0}^{N} \nu (\m_n)$ for any $N \ge 0$, so $\nu (b_0) \ge \sum_{n = 0}^{\infty} \nu (\m_n) = \infty$, which contradicts $\nu (b_0) < \infty$.
	This shows that the sequence $\{a_n/b_n\}$ is finite and hence $y \in \bigcup_{n}R_n$.  
	 \qed
\end{proof}

\begin{remark} \label{divmultse}
Examples  of $(R, \mathcal{V})$ with divergent multiplicity sequence
such that $\mathcal{V}$ is not a DVR are  given in \cite[Examples 7.11 and  7.12]{HKT2}.

\end{remark}

\begin{discussion}  \label{3.6disc}   
Let $(R,\m)$ be a Noetherian local domain, let $\mathcal{V}$ be a rational  rank $1$ valuation ring that birationally dominates $R$,
  and let $\{ R_i \}_{i = 0}^{\infty}$ be the infinite sequence of LQTs of $R$ along $\mathcal{V}$. 
 The divergence of the multiplicity sequence in  Proposition~\ref{3.7} is a sufficient condition for $\mathcal V = \bigcup_{n \ge 0}R_n$, but not a 
  necessary condition; see Example~\ref{3.7ex}. 
  It would be interesting to understand more about conditions in order that the multiplicity sequence of 
  $(R, \mathcal{V})$ is divergent.    Example~\ref{3.7ex} illustrates that an archimedean Shannon extension $S$ 
  of a 3-dimensional regular local ring $R$ may be
  birationally dominated by a rational rank 1 valuation ring $V$, where $S \subsetneq V$. In 
  this case by Proposition~\ref{3.7}, the multiplicity sequence of $(R, V)$ must be convergent.
\end{discussion}

\begin{example} \label{3.7ex}   
Let $x, y, z$ be indeterminates over a field $k$. We first construct a rational rank 1 valuation ring
$V'$ on the field $k(x,y)$.  We do this by describing an infinite sequence $\{(R'_n, \m'_n)\}_{n \ge 0}$
of local quadratic transforms of $R'_0 = k[x, y]_{(x,y)}$.  To indicate properties of the sequence, we
define a rational valued function $v$ on specific generators  of the $\m'_n$.  The function $v$ is to
be additive on products. 
 We set $v(x) = v(y) = 1$. 
This indicates that $y/x$ is a unit in every valuation ring birationally dominating $R'_1$.

\vskip 2pt
\noindent
{\bf Step 1.}
 Let $R'_1$ have maximal ideal $\m'_1 = (x_1,  y_1)R_1$,  where $x_1 = x,   ~ y_1 = (y/x) - 1$ .
 Define $v(y_1) = 1/2$.  
 
 \vskip 2pt

 \noindent
 {\bf Step 2.}
 The  local quadratic
 transform $R'_2$ of $R'_1$  has maximal ideal $\m'_2$ generated by 
 $x_2 = x_1/y_1, ~y_2 = y_1$.
 We have $v(x_2) = 1/2$,  $v(y_2) = 1/2$.  
 
 \vskip 2pt

 \noindent
 {\bf Step 3.} 
 Define $y_3 = (y_2/x_2) - 1$ and 
 assign $v(y_3) = 1/4$.  Then $x_3 = x_2$, $v(x_3) = 1/2$.
 
\vskip 2pt

 \noindent
 {\bf Step 4.}   
  The    local quadratic   transform $R'_4$ of $R'_3$ has maximal ideal $\m'_4$ generated by
   $x_4 = x_3/y_3, ~y_4 = y_3$.  Then $v(x_4) = v(y_4) = 1/4$.
   
 Continuing this  2-step process yields an infinite directed union $(R'_n,\m'_n)$ of local 
 quadratic transforms of 2-dimensional RLRs.  Let $V' = \bigcup_{n \ge 0}R'_n$.
 Then $V'$ is a valuation ring by \cite[Lemma~12]{Abh}.  
 Let $v'$ be a valuation associated to $V'$ such that $v'(x)= 1$.  Then $v'(y) = 1$ 
 and $v'$ takes the same rational values on the  generators of $\m'_n$ as 
 defined by $v$.  Since there are infinitely many translations as described in Steps $2n + 1$
 for each  integer $n \ge 0$, it follows that $V'$ has rational rank 1,  e.g.,  
 see \cite[Remark~5.1(4)]{HKT2}.  
 
 The multiplicity values of $\{R'_n,\m'_n)\}$  are   $1, \frac{1}{2}, \frac{1}{2},
   \frac{1}{4},  \frac{1}{4},  \frac{1}{8},  \frac{1}{8} \ldots $,  the sum of which
   converges to 3.
   
   Define $V = V'(\frac{z}{x^2y^2})$,  the localization of the polynomial ring $V'[\frac{z}{x^2y^2}]$ at the 
   prime ideal $\m_{V'}V'[\frac{z}{x^2y^2}]$.  One sometimes refers to $V$ as a Gaussian or trivial or Nagata
   extension of $V'$ to a valuation ring on the simple transcendental field extension generated by 
   $\frac{z}{x^2y^2}$ over $k(x, y)$.  It follows that $V$ has the same value group as $V'$ and the residue
   field of $V$ is a simple transcendental extension  of the residue field of $V'$  that is generated by the 
   image of $\frac{z}{x^2y^2}$ in $V/\m_V$. 
   
    Let $v$ denote the 
 associated valuation to $V$  such that   $v(x) = 1$.  It follows that $v(y) = 1$ and $v(z) = v(x^2y^2) = 4$. 
 Let $R_0 = k[x, y, z]_{(x, y, z)}$.  Then $R_0$ is birationally dominated by $V$. Let
 $\{(R_n, \m_n)\}_{n \ge 0}$ be the sequence of local quadratic transforms of $R_0$ along $V$.
 
 We describe the first few steps:

\vskip 2pt
\noindent
{\bf Step 1.}
  $R_1$ has maximal ideal $\m_1 = (x_1,  y_1, z_1)R_1$,  where $x_1 = x,   ~ y_1 = (y/x) - 1$, and $z_1 = z/x$.
  Also  $v(y_1) = 1/2$.

 \vskip 2pt
 
 \noindent
 {\bf Step 2.}
 The  local quadratic
 transform $R_2$ of $R_1$ along $V$ has maximal ideal $\m_2$ generated by $x_2 = x_1/y_1, ~y_2 = y_1$ and 
 $z_2 = z_1/y_1$.    We have $v(x_2) = 1/2$,  $v(y_2) = 1/2$ and $v(z_2) = 4 - 3/2 > 3/2$.  
 
 \vskip 2pt
 
 \noindent
 {\bf Step 3.}  The local quadratic transform $R_3$  of $R_2$ along $V$ has $y_3 = (y_2/x_2) - 1$, where 
 $v(y_3) = 1/4$, and  $x_3 = x_2$, $v(x_3) = 1/2$ and $v(z_3) > 1/2$.
 
\vskip 2pt
 
 \noindent
 {\bf Step 4.}   
  The    local quadratic
 transform $R_4$ of $R_3$ along $V$ has maximal ideal $\m_4$ generated by $x_4 = x_3/y_3, ~y_4 = y_3$ and 
  $z_4 = z_3/y_3$. 
  
  The multiplicity values of the sequence  $\{(R_n, \m_n)\}_{n \ge 0}$ along $V$ are  the same as that 
   for  $\{R'_n,\m'_n)\}$, namely 
  $1, \frac{1}{2}, \frac{1}{2},  \frac{1}{4},  \frac{1}{4},  \frac{1}{8},  \frac{1}{8} \ldots $.   
   Let $S = \bigcup_{n \ge 0}R_n$. Since $S$ is birationally dominated by the rank 1 valuation ring $V$,
   it follows that $S$ is an archimedean Shannon extension.  Since we never divide in the $z$-direction, we have $S \subseteq R_{zR}$,  and $S$ is 
  not a valuation ring. 
 \end{example}

\section{Quadratic Shannon extensions along a prime ideal}\label{section 3.5}

Let $R$ be a Noetherian local domain 
and let $\{ R_n \}_{n \ge 0}$ be an infinite sequence of LQTs of $R = R_0$.
Using the terminology of Granja and Sanchez-Giralda \cite[Definition~3 and Remark~4]{Gra2},
for a prime ideal $P$ of $R$,
we say the quadratic sequence $\{R_n\}$ is along $R_P$ 
if $\bigcup_{n \ge 0} R_n \subseteq R_P$. 

Let $P$ be a nonzero, nonmaximal prime ideal of a Noetherian local domain $(R, \m)$.
Proposition~\ref{3.5} establishes a one-to-one correspondence 
between sequences $\{R_n\}$ of LQTs of $R = R_0$ along $R_P$ 
and sequences $\{ \overline{R_n} \}$ of LQTs of $\overline {R_0} = R/P$.

\begin{proposition}\label{3.5}
	Let $R$ be a Noetherian local domain and let $P$ be a nonzero nonmaximal prime ideal of $R$.
	Then there is a one-to-one correspondence between:
	
	\begin{description}
		\item[{\em (1)}]  Infinite sequences $\{ R_n \}_{n \ge 0}$ of LQTs  of $R_0 = R$ along $R_P$.
		\item[{\em (2)}] Infinite sequences $\{ \overline{R_n} \}_{n \ge 0}$ of LQTs  of $\overline{R_0} = R / P$.
	\end{description}
	Given such a sequence $\{ R_n \}_{n \ge 0}$, the corresponding sequence is $\{ R_n / (P R_P \cap R_n) \}$.
	Denote $S = \bigcup_{n \ge 0} R_n$ and $\overline{S} = \bigcup_{n \ge 0} \overline{R_n}$,
	and let $\widetilde{S}$ be the pullback of $\overline{S}$ with respect to the quotient map $R_P \rightarrow \kappa(P)$ as in the following diagram:
	\begin{center}
\begin{tikzcd}
   \widetilde{S} = \alpha^{-1}( \overline{S})\arrow[twoheadrightarrow]{r}\arrow[hookrightarrow]{d} 
  &  \overline{S}\arrow[hookrightarrow]{d}
  \\ 
   R_P\arrow[twoheadrightarrow]{r}{\alpha} 
 &  \kappa(P)
\end{tikzcd}
\end{center}
	Then $\widetilde{S} = S + P R_P$ and $\widetilde{S}$ is non-archimedean.
\end{proposition}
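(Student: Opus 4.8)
The plan has three parts: (i) establish the asserted one-to-one correspondence by an induction on $n$ showing that forming a local quadratic transform commutes with reduction modulo $PR_P$; (ii) deduce the identity $\widetilde{S} = S + PR_P$ formally from the way $\widetilde{S}$, $S$ and $\overline{S}$ are built; and (iii) obtain that $\widetilde{S}$ is non-archimedean from the transfer properties of $\square^*$ diagrams recalled in this section.

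For (i), fix a sequence $\{R_n\}$ along $R_P$; write $\m_n$ for the maximal ideal of $R_n$, put $P_n := PR_P \cap R_n$, and let $R_{n+1} = R_n[\m_n/x_n]_{Q_n}$, where $x_n \in \m_n$ and $Q_n$ is a prime of $R_n[\m_n/x_n]$ containing $\m_n R_n[\m_n/x_n] = x_n R_n[\m_n/x_n]$. I would prove by induction that $P_n \subsetneq \m_n$, that $x_n \notin P_n$, and that $\overline{R_{n+1}} := R_{n+1}/P_{n+1}$ is a local quadratic transform of $\overline{R_n} := R_n/P_n$, with $\overline{R_0} = R/P$. The crux is: if $x_n \in P_n$, then $\m_n R_{n+1} = x_n R_{n+1} \subseteq PR_P$, so $\m_n = \m_n R_{n+1} \cap R_n \subseteq PR_P \cap R_n = P_n$, contradicting $P_n \subsetneq \m_n$; hence $x_n \notin PR_P$, so $x_n$ is a unit of $R_P$. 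This forces $R_n[\m_n/x_n] \subseteq R_P$, identifies $\alpha(R_n[\m_n/x_n])$ with $\overline{R_n}[\overline{\m_n}/\overline{x_n}]$ (where $\alpha : R_P \to \kappa(P)$ is the quotient map and bars denote reduction modulo $P_n$), with kernel the prime $Q_n' := PR_P \cap R_n[\m_n/x_n]$, and one checks $Q_n' \subsetneq Q_n$ (since $x_n \in Q_n \setminus Q_n'$) and $PR_P \cap R_{n+1} = Q_n' R_{n+1}$. Passing to the localization at $Q_n$ and the quotient by $Q_n'$ then yields $\overline{R_{n+1}} \cong \overline{R_n}[\overline{\m_n}/\overline{x_n}]_{Q_n/Q_n'}$, a local quadratic transform of $\overline{R_n}$. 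Since a local quadratic transform of a local domain that is not a field is again not a field, and $R/P$ is not a field, each $\overline{R_n}$ is not a field; this observation is needed in (iii). For the reverse direction I would lift a given sequence $\{\overline{R_n}\}$ one term at a time: choosing $x_n \in \m_n$ lifting the center of the transform $\overline{R_{n+1}} = \overline{R_n}[\overline{\m_n}/\overline{x_n}]_{\mathfrak q}$ gives $x_n \notin P_n$, hence $R_n[\m_n/x_n] \subseteq R_P$ automatically, and pulling $\mathfrak q$ back to a prime of $R_n[\m_n/x_n]$ produces the desired $R_{n+1} \subseteq R_P$ with $R_{n+1}/(PR_P \cap R_{n+1}) = \overline{R_{n+1}}$. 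One then checks the lift is independent of the choices and that the two constructions are mutually inverse.

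Part (ii) is purely formal: since $\widetilde{S} = \{t \in R_P : \alpha(t) \in \overline{S}\}$ and $\overline{S} = \bigcup_n \alpha(R_n)$, we have $\alpha(t) \in \overline{S}$ if and only if $\alpha(t) = \alpha(s)$ for some $s \in \bigcup_n R_n = S$, i.e.\ $t - s \in \ker\alpha = PR_P$ for some $s \in S$; together with $S, PR_P \subseteq R_P$ this gives $\widetilde{S} = S + PR_P$. For (iii), the square defining $\widetilde{S}$ is of type $\square^*$: all four rings are domains, $\alpha$ is surjective, and $\overline{S}$ has quotient field $\kappa(P)$ since $R/P \subseteq \overline{S} \subseteq \kappa(P)$. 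Moreover $R_P$ is local with $\dim R_P \ge 1$ (as $P \ne 0$), and $\overline{S} = \bigcup_n \overline{R_n}$ is a local domain whose maximal ideal $\bigcup_n \overline{\m_n}$ contains $\overline{\m_0} = \m/P \ne 0$, so $\dim \overline{S} \ge 1$. By the transfer property for $\square^*$ diagrams recalled above, $R_P$ is a localization of $\widetilde{S}$ and $\widetilde{S}$ is non-archimedean.

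I expect the main obstacle to be the inductive step in (i): arranging the kernels, the contracted and extended ideals, and the commutation of localization with passage to a quotient so that they all line up --- in particular verifying $x_n \notin P_n$ and identifying $PR_P \cap R_{n+1}$ with $(PR_P \cap R_n[\m_n/x_n])R_{n+1}$. Verifying that the lift in the reverse direction is independent of choices is the other delicate point, although it is more a matter of bookkeeping than of a genuine difficulty.
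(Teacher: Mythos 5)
Your proposal is correct, but it reaches the correspondence by a genuinely different route than the paper. The paper handles the bijection in one line by citing Hartshorne, Corollary II.7.15: blowing up the closed subscheme $\operatorname{Spec}(R/P)$ at its closed point is the strict transform of $\operatorname{Spec}(R/P)$ in the blow-up of $\operatorname{Spec}(R)$ at $\m$, and iterating gives the correspondence of LQT sequences; it then proves non-archimedean directly by noting that for $x \in \m_{\widetilde S}\setminus PR_P$ the element $x$ is a unit of $R_P$, so $PR_P = x^{-k}PR_P \subseteq \widetilde S$ and hence $PR_P \subseteq \bigcap_k x^k \widetilde S \neq 0$. Your induction replaces the geometric citation by explicit algebra, and the key verifications you give are right: $x_n \notin PR_P$ (else $\m_n \subseteq P_n$), hence $R_n[\m_n/x_n] \subseteq R_P$; $Q_n' = PR_P \cap R_n[\m_n/x_n] \subseteq Q_n$ because an element of $Q_n'$ outside $Q_n$ would be invertible in $R_{n+1} \subseteq R_P$, forcing $1 \in PR_P$; and $PR_P \cap R_{n+1} = Q_n'R_{n+1}$, so the quotient is the LQT $\overline{R_n}[\overline{\m_n}/\overline{x_n}]_{Q_n/Q_n'}$. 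What your route buys is a self-contained argument that shows exactly where ``along $R_P$'' is used; what it defers is the one step that is the real content of ``one-to-one,'' namely uniqueness of the lift, and that step should be written out rather than dismissed as bookkeeping, although it does go through: if $R_{n+1}=R_n[\m_n/x]_Q$ and $R_{n+1}'=R_n[\m_n/x']_{Q'}$ both lie in $R_P$ and have the same image in $\kappa(P)$, then $x'/x$ is a unit modulo $PR_P\cap R_{n+1}$, hence a unit of the local ring $R_{n+1}$, so $R_n[\m_n/x']\subseteq R_{n+1}$, and since $\m_{n+1}\cap R_n[\m_n/x']\subseteq Q'$ one gets $R_{n+1}'\subseteq R_{n+1}$ and, by symmetry, equality. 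Parts (ii) and (iii) of your plan agree in substance with the paper: the identity $\widetilde S = S + PR_P$ is the same formal observation, and for (iii) you invoke the $\square^*$ transfer observation from Section 3, which the paper states without proof, so it is cleaner to inline the one-line argument above; your check that $\m_{\widetilde S}\neq PR_P$ (via $\m/P \neq 0$) is exactly what is needed to start it.
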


\begin{proof}
	The correspondence follows from   \cite[Corollary II.7.15, p.~165]{H}.
	The fact that $\widetilde{S} = S + P R_P$ is a consequence 
	of the fact that $\widetilde{S}$ is a pullback of $\overline{S}$ and $R_P$.  
	That  $\widetilde{S}$ is non-archimedean is a consequence of the observation that
for each $x \in {\ff m}_{\widetilde{S}} \setminus PR_P$, the fact that $PR_P \subseteq \widetilde{S} \subseteq R_P$ implies
	 $PR_P \subseteq x^k{\widetilde{S}}$ for all $k>0$. 
\end{proof}

\begin{lemma}\label{3.6L}
	Assume notation as in Proposition~\ref{3.5}.
	If $\overline{S}$ is a rank $1$ valuation ring and the multiplicity sequence of $(\overline{R}, \overline{S})$ is divergent, then $S = \tilde{S}$.
\end{lemma}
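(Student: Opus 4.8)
The plan is to prove the two inclusions $S\subseteq\widetilde S$ and $\widetilde S\subseteq S$. The first is immediate: $\alpha(R_n)=\overline{R_n}\subseteq\overline S$ for every $n$, so $R_n\subseteq\alpha^{-1}(\overline S)=\widetilde S$. For the reverse inclusion I would use the identity $\widetilde S=S+PR_P$ from Proposition~\ref{3.5}, which reduces the lemma to showing $PR_P\subseteq S=\bigcup_n R_n$. This containment is the whole content of the lemma, and I would obtain it by a descent through the quadratic sequence that is a ``relative'' form of the proof of Proposition~\ref{3.7}: one runs the same division argument, but modulo $PR_P$ --- where the tower becomes $\{\overline{R_n}\}$ with union the rank $1$ valuation ring $\overline S$ --- using the valuation $\nu$ of $\overline S$ only to control denominators. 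Throughout, set $P_n:=PR_P\cap R_n$, the kernel of $R_n\to\overline{R_n}$; note $P_n$ is a nonmaximal prime of $R_n$ (otherwise $\overline{R_n}=R_n/P_n$ would be a field, the tower would stabilize at it, and $\overline S=\bigcup_n\overline{R_n}$ would be a field rather than a rank $1$ valuation ring), so $P_n\subseteq\m_n$.

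Next I would fix a nonzero $q\in PR_P$ and, using that $P$ is an ideal of $R_0$, write $q=a_0/b_0$ with $a_0\in P=P_0$ and $b_0\in R_0\setminus P_0$. The inductive claim to carry is: at stage $n$ one has $q=a_n/b_n$ with $a_n\in P_n$ and $b_n\in R_n\setminus P_n$. If at some stage $b_n\notin\m_n$, then $b_n$ is a unit of $R_n$, so $q=a_nb_n^{-1}\in R_n\subseteq S$ and the descent ends. Otherwise $b_n\in\m_n$, and since also $a_n\in P_n\subseteq\m_n$, both $a_n,b_n$ lie in $\m_nR_{n+1}=x_nR_{n+1}$ for a chosen generator $x_n\in\m_n$; thus $a_{n+1}:=a_n/x_n$ and $b_{n+1}:=b_n/x_n$ lie in $R_{n+1}$ and $q=a_{n+1}/b_{n+1}$. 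The point that makes the invariant persist is that $x_n$ is a unit of $R_P$ --- otherwise $x_n\in PR_P$ would give $\m_n\subseteq\m_nR_{n+1}=x_nR_{n+1}\subseteq PR_P$, hence $\m_n\subseteq P_n$, a contradiction --- so from $a_n\in PR_P$ and $b_n\notin PR_P$ one gets $a_{n+1}\in PR_P\cap R_{n+1}=P_{n+1}$ and $b_{n+1}\notin PR_P$, i.e. $b_{n+1}\in R_{n+1}\setminus P_{n+1}$.

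Finally I would show the descent cannot run forever, which is the one step where the divergence hypothesis is consumed. Each $\overline{b_n}$ is a nonzero element of $\overline{R_n}\subseteq\overline S$, so $\nu(\overline{b_n})\ge0$; and at a division step $\overline{b_{n+1}}=\overline{b_n}/\overline{x_n}$ with $\overline{x_n}$ a nonzero element of the maximal ideal of $\overline{R_n}$, so $\nu(\overline{b_{n+1}})=\nu(\overline{b_n})-\nu(\overline{x_n})$ with $\nu(\overline{x_n})\ge\nu(\overline{\m_n})$. An infinite descent would therefore force $\sum_{k\ge0}\nu(\overline{\m_k})\le\sum_{k\ge0}\nu(\overline{x_k})\le\nu(\overline{b_0})<\infty$, contradicting the divergence of the multiplicity sequence $\{\nu(\overline{\m_k})\}$ of $(\overline R,\overline S)$. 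Hence $q\in R_n$ for some $n$, so $PR_P\subseteq S$ and therefore $S=\widetilde S$. The main obstacle is precisely this last estimate: one must be sure that every required division lowers $\nu(\overline{b_n})$ by at least the corresponding multiplicity, so that divergence of $\sum_k\nu(\overline{\m_k})$ rules out an infinite descent; the remaining bookkeeping --- keeping $a_{n+1},b_{n+1}$ in $R_{n+1}$, the numerator in $P_n$, and the denominator out of $P_n$ --- is routine once $x_n$ is recognized as a unit of $R_P$.
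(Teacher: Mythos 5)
Your proposal is correct and follows essentially the same route as the paper's proof: reduce via $\widetilde{S}=S+PR_P$ to showing $PR_P\subseteq S$, then run the descent dividing numerator and denominator by the generator of $\m_nR_{n+1}$, with the divergence of $\sum_n\nu(\overline{\m_n})$ forcing termination at a stage where the denominator is a unit. The extra verifications you include (that $x_n\notin PR_P$, hence the numerator stays in $PR_P$ and the denominator stays outside) are correct details the paper leaves implicit.
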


\begin{proof}
	Let $\nu$ be a valuation for $\overline{S}$ and assume that $\nu$ takes values in ${\mathbb{R}}$.  Let $f \in \tilde{S}$. We claim that $ f\in S$. 
	Since $\tilde{S} = S + P R_P$, we may assume $f \in P R_P$.
	Write $f = \frac{g_0}{h_0}$, where $g_0 \in P$ and $h_0 \in R \setminus P$.
	
	Suppose we have an expression of the form $f = \frac{g_n}{h_n}$,
	where $g_n \in P R_P \cap R_n$ and $h_n \in R_n \setminus P R_P$.
	Write $\m_n R_{n+1} = x R_{n+1}$ for some $x \in \m_n$.
	Since $P R_P \cap R_n \subseteq \m_n$, it follows that $g_n = x g_{n+1}$ for $g_{n+1} = \frac{g_n}{x} \in R_{n+1}$.
	Denote the image of $h \in R_n$ in $\overline{R_n}$ by $\overline{h}$. 
	Since $h_n \in R_n \setminus P R_P$, we have that $\overline{h_n} \ne 0$ and $\nu (\overline{h_n})$ is a finite nonnegative real number.
	If $\nu (\overline{h_n}) > 0$, then $h_n \in \m_n$, so $h_n = x h_{n+1}$ for $h_{n+1} = \frac{h_n}{x} \in R_{n+1}$.
	Thus we have written $f = \frac{g_{n+1}}{h_{n+1}}$, where $g_{n+1} \in P R_P \cap R_{n+1}$ and $h_{n+1} \in R_{n+1} \setminus P R_P$,
	such that $\nu (\overline{h_{n+1}}) = \nu (\overline{h_n}) - \nu (\overline{\m_n})$.
	
	Since $\sum_{n \ge 0} \nu (\overline{\m_n}) = \infty$ and $\nu (\overline{h_0})$ is finite, this process must halt
	with $f = \frac{g_n}{h_n}$ as before such that $\nu (\overline{h_n}) = 0$.
	Since $\nu (\overline{h_n}) = 0$, $\overline{h_n}$ is a unit in $\overline{R_n}$, so $h_n$ is a unit in $R_n$, and thus $f \in R_n$.
\end{proof}

\begin{lemma} \label{3.6}  Let $P$ be a nonzero nonmaximal prime ideal of a regular local ring $R$.
Let $\{ R_n \}_{n \ge 0}$ be a sequence of LQTs of $R_0 = R$ along $R_P$ and let $\{\overline{R_n}\}$ be the induced sequence of LQTs of $\overline{R_0} = R/P$ as in Proposition~\ref{3.5}.
Denote $S = \bigcup_{n \ge 0} R_n$ and $\overline{S} = \bigcup_{n \ge 0} \overline{R_n}$.
Then the following are equivalent:
	\begin{description}
		\item[{\em (1)}] $S$ is the pullback of $\overline{S}$ along the 
		surjective  map $R_P \rightarrow  \kappa (P)$.
		\item[{\em (2)}] The Noetherian hull of $S$ is $R_{P}$.
		\item[{\em (3)}] $\overline{S}$ is a rank $1$ valuation ring and the multiplicity sequence of $(\overline{R}, \overline{S})$ is divergent.
	\end{description}
	If these conditions hold, then $\overline{S}$ has rational rank $1$.
\end{lemma}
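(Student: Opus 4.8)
The plan is to prove the cycle of implications (3)~$\Rightarrow$~(1)~$\Rightarrow$~(2)~$\Rightarrow$~(3), using the pullback machinery set up in Proposition~\ref{3.5} together with Lemma~\ref{3.6L} and the structure theory of quadratic Shannon extensions recalled in Section~2. Throughout, write $\widetilde S = S + PR_P$ for the pullback of $\overline S$ along $\alpha\colon R_P \to \kappa(P)$, as in Proposition~\ref{3.5}; by that proposition $S \subseteq \widetilde S \subseteq R_P$ and $\widetilde S$ is non-archimedean, and condition~(1) is precisely the assertion $S = \widetilde S$.

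For (3)~$\Rightarrow$~(1): this is exactly the content of Lemma~\ref{3.6L}, so there is nothing to do. For (1)~$\Rightarrow$~(2): assuming $S = \widetilde S$, observe that $S$ is then non-archimedean, so Theorem~\ref{overview} applies to $S$; its Noetherian hull is $T = S_{Q}$ where $Q = \bigcap_{n\ge 1} x^n S$ for any $\m_S$-primary element $x$. I would identify $Q$ with $PR_P$: since $PR_P \subseteq \widetilde S = S$ and $PR_P \subseteq x^kS$ for every $k>0$ (by the argument in the proof of Proposition~\ref{3.5}, as $x \in \m_S \setminus PR_P$), we get $PR_P \subseteq Q$; conversely $Q$ is a nonmaximal prime of $S$ and, because $S/Q$ is a one-dimensional valuation ring while $\widetilde S/PR_P \cong \overline S$, a comparison of the two chains $S \subseteq T$ and $\widetilde S = S \subseteq R_P$ forces $Q = PR_P$ and hence $T = S_Q = (PR_P)$-localization $= R_P$. (Concretely: $R_P$ is a Noetherian overring of $S$ contained in every DVR overring that properly contains $S$, so $R_P \supseteq T$ by minimality of the Noetherian hull; and $T = S_Q \supseteq S_{PR_P} = R_P$ since $S_{PR_P}$ contains $R$ localized away from $P$.) For (2)~$\Rightarrow$~(3): if the Noetherian hull of $S$ is $R_P$, then in particular $S$ has a Noetherian hull properly containing it, so $\dim S \ge 2$ and $S$ is non-archimedean by Theorem~\ref{overview}; applying the ``moreover'' clause of Theorem~\ref{overview} with $Q = \bigcap_n x^nS$, we get $S_Q = R_P$, so $Q = PR_P$ and $S/Q = S/PR_P$. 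Under the isomorphism $\widetilde S/PR_P \cong \overline S$ and the inclusion $S \hookrightarrow \widetilde S$, the ring $S/PR_P$ maps onto a subring of $\overline S$ whose fraction field is $\kappa(P)/PR_P \cdot(\cdot)$—more carefully, $S/(PR_P\cap S)$ embeds in $\overline S$ and both have fraction field the residue field of $R_P/PR_P$—and Theorem~\ref{overview} says $S/Q$ is a rational rank~$1$ valuation ring on that field; matching this against the sequence $\{\overline{R_n}\}$ whose union is $\overline S$ (Proposition~\ref{3.5}) shows $\overline S = S/Q$ is that rational rank~$1$ valuation ring. Finally, divergence of the multiplicity sequence of $(\overline R,\overline S)$ is condition~(6) of Theorem~\ref{overview} read off in $\overline S$: $\sum_n w(\m_n) = \infty$ translates, via $\ord_n$ on $R_n$ versus $\ord_{\overline{R_n}}$ on $\overline{R_n}$ and the valuation $\nu$ on $\overline S$, into $\sum_n \nu(\overline{\m_n}) = \infty$.

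The last sentence, that $\overline S$ has rational rank~$1$, is then immediate: under any of the equivalent conditions, $\overline S = S/Q$ is a rational rank~$1$ valuation ring by the ``moreover'' part of Theorem~\ref{overview} (or, in the presence of (3), by Remark~\ref{GMR remark}, since a divergent multiplicity sequence forces rational rank~$1$).

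The main obstacle I expect is the bookkeeping in (1)~$\Rightarrow$~(2) and (2)~$\Rightarrow$~(3): pinning down precisely that the prime $Q = \bigcap_n x^nS$ furnished abstractly by Theorem~\ref{overview} coincides with the concrete prime $PR_P$ coming from the pullback square of Proposition~\ref{3.5}, and that the abstract valuation ring $S/Q$ of Theorem~\ref{overview} is the \emph{same} as the concrete union $\overline S = \bigcup_n \overline{R_n}$. Both hinge on the uniqueness statements in Theorem~\ref{hull} (the Noetherian hull is the unique minimal Noetherian overring) and Theorem~\ref{overview} (the dimension-one prime of $S$ is unique, equal to $Q_\infty$), so the argument is really about invoking those uniqueness properties to force the two descriptions to agree, rather than any new computation.
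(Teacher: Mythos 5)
Your overall architecture is the same as the paper's: the cycle (1)$\Rightarrow$(2)$\Rightarrow$(3)$\Rightarrow$(1), with Lemma~\ref{3.6L} disposing of (3)$\Rightarrow$(1), and (2)$\Rightarrow$(3) obtained by combining Theorem~\ref{overview} (to get $PR_P\subseteq S$, $Q=PR_P$, and $S/Q$ a rational rank~$1$ valuation ring, hence $\overline S\cong S/PR_P$ via Proposition~\ref{3.5}) with the identification of the valuation of $\overline S$ with the $w$-data; the paper does that last translation by citing Theorem~\ref{nonarch valuation}(2) (the valuation $\nu$ of $\overline S$ is $w'$, so $\nu(\overline{\m_n})=w(\m_n)$ and item (6) of Theorem~\ref{overview} gives divergence), whereas your ``$\ord_n$ versus $\ord_{\overline{R_n}}$'' remark is vaguer but aims at the same point.

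The genuine gap is in (1)$\Rightarrow$(2), precisely the step you flagged as bookkeeping. You correctly obtain $PR_P\subseteq Q$ and $T\subseteq R_P$ (the latter from minimality of the Noetherian hull, Theorem~\ref{hull}(5) --- your parenthetical claim that $R_P$ lies in every DVR overring properly containing $S$ is neither needed nor justified). But the reverse containment $R_P\subseteq T$ is where your argument fails: from $PR_P\subseteq Q$ one gets $S_Q\subseteq S_{PR_P}$, not $S_Q\supseteq S_{PR_P}$ as you wrote, so your chain $T=S_Q\supseteq S_{PR_P}=R_P$ presupposes $Q\subseteq PR_P$, i.e.\ the very equality $Q=PR_P$ you are trying to prove; and ``a comparison of the two chains forces $Q=PR_P$'' is not an argument (a priori $Q$ could strictly contain $PR_P$, with $\overline S$ of dimension $\ge 2$). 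The paper closes this with a different idea you do not use: since (1) makes $PR_P$ a nonzero \emph{common} ideal of $S$ and $R_P$, every element of $R_P$ is almost integral over $S$, hence over $T$; by Theorem~\ref{overview}(5) the hull $T=S_Q$ is a regular local ring, hence completely integrally closed, so $R_P\subseteq T$ and therefore $T=R_P$ (from which $Q=PR_P$ then falls out, rather than being the input). Without an argument of this kind --- or some other proof that $Q\subseteq PR_P$ --- your (1)$\Rightarrow$(2) is incomplete.
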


\begin{proof} (1) $\implies$ (2):  As a pullback, the quadratic Shannon extension $S$ is non-archimedean (see the proof of Proposition~\ref{3.5}). 
Let $x \in S$ be such that $xS$ is $\m_S$-primary (see Theorem~\ref{hull}). By Theorem~\ref{overview}, the ideal 
$Q = \bigcap_{n \geq 0}x^nS$ is a nonzero prime ideal of $S$, every nonmaximal prime ideal of 
$S$ is contained in $Q$ and $T = S_Q$. Assumption (1) implies that $PR_P$ is a nonzero ideal of both $S$ and $R_P$.
Hence $R_P$ is almost integral over $S$. 
We have $S \subseteq S_Q = T \subseteq R_P$, and $S_Q$ is an  RLR and therefore completely integrally 
closed. It follows that $S_Q = R_P$ is the Noetherian hull of $S$.

\noindent
(2) $\implies$ (3):  Since the Noetherian hull $R_P$ of $S$ is local,  Theorem~\ref{overview} implies
that $S$ is non-archimedean and $PR_P \subseteq S$.  By Theorem~\ref{pullbacks}, $\overline S = S/PR_P$ is a rational
rank 1 valuation ring. 
The valuation $\nu$ associated to $\overline S$  is equal to the valuation
$w'$ of Theorem~\ref{nonarch valuation}. By item 2 of  Theorem~\ref{nonarch valuation}
and item~6 of Theorem~\ref{overview}, we have
 $$\sum_{n=0}^{\infty} \nu (\overline{{\ff m}_n}) = \sum_{n=0}^{\infty} w ({\ff m}_n) = \infty.$$
 
\noindent
(3) $\implies$ (1): This is proved in Lemma ~\ref{3.6L}.

\end{proof}

\begin{remark} 
	Let $P$ be a nonzero nonmaximal prime ideal of $R$ and let $S$ be a non-archimedean quadratic Shannon extension of $R$ with Noetherian hull $R_P$.
	The proof of Lemma~\ref{3.6} shows that the multiplicity sequence of $(R/P,S/PR_P)$ is given by $\{w(\m_i)\}$, where $w$ is as in Definition~\ref{w-function}.

With notation as in Lemma~\ref{3.6},   examples where $\overline{S}$ is a rank 1 valuation ring that is not discrete are given in 
 \cite[Examples~7.11 and  7.12]{HKT2}. 
		
\end{remark}

\begin{theorem}[Existence of  Shannon Extensions]  \label{3.8}
	Let $P$ be a nonzero nonmaximal prime ideal of a regular local ring $R$.
	\begin{description}[(2)]
		\item[\em{(1)}] There exists a non-archimedean quadratic 
		 Shannon extension of $R$ with $R_P$ as its Noetherian hull.
		\item[\em{(2)}]    If there exists an archimedean  quadratic Shannon extension of $R$
		contained in $R_P$,  then  $\dim R / P \ge 2$.
	\end{description}
\end{theorem}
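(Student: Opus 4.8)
For part (1), the plan is to build the Shannon extension from the bottom up by first constructing a suitable sequence of LQTs in the residue ring $\overline{R} = R/P$ and then pulling back. First I would choose a regular system of parameters for $R$ extending a regular system of parameters of $R/P$, so that the quotient map $R \to R/P$ is compatible with the LQT construction as in Proposition~\ref{3.5}. Next, using Remark~\ref{GMR remark} together with Proposition~\ref{3.7}, I would produce a rank~$1$ valuation ring $\overline{V}$ birationally dominating $\overline{R}$ whose multiplicity sequence $\{\nu(\overline{\m_n})\}$ is divergent; concretely, one can take the union of a sequence of LQTs of $\overline{R}$ chosen along a valuation designed to have $\sum \nu(\overline{\m_n}) = \infty$ (for instance the ``monomial'' valuation that divides repeatedly in a fixed direction, so each $\nu(\overline{\m_n})$ stays bounded below). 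This determines, via the one-to-one correspondence of Proposition~\ref{3.5}, a sequence $\{R_n\}$ of LQTs of $R$ along $R_P$, and I would set $S = \bigcup_n R_n$. By Lemma~\ref{3.6L}, $S$ equals the pullback $\widetilde{S}$ of $\overline{S} = \overline{V}$ along $R_P \to \kappa(P)$, so condition~(1) of Lemma~\ref{3.6} holds; then Lemma~\ref{3.6} gives that $S$ is a non-archimedean quadratic Shannon extension of $R$ with Noetherian hull $R_P$, as required. The one point needing care is that $\dim R_n \ge 2$ for all $n$ so that $S$ genuinely is a quadratic Shannon extension in the sense of Definition~\ref{1.1}; this follows because $R_P \subsetneq R_n$ forces $P R_P \cap R_n$ to be a nonmaximal prime of the regular local ring $R_n$, whence $\dim R_n \ge \hgt(P R_P \cap R_n) + \dim \overline{R_n} \ge 1 + 1 = 2$, using $\dim \overline{R_n} \ge 1$ since $\overline{V}$ has rank~$1$.

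For part (2), the plan is to argue by contradiction: suppose $S$ is an archimedean quadratic Shannon extension of $R$ with $S \subseteq R_P$, and suppose $\dim R/P \le 1$. Since $P$ is nonmaximal, $\dim R/P \ge 1$, so $\dim R/P = 1$, i.e.\ $P$ has height $\dim R - 1$. I would then examine the induced sequence on residue rings. Because $S \subseteq R_P$, the quadratic sequence $\{R_n\}$ defining $S$ is along $R_P$, so Proposition~\ref{3.5} produces the induced sequence $\{\overline{R_n}\}$ of LQTs of $\overline{R} = R/P$. But $\overline{R}$ has dimension $1$, so it is a one-dimensional regular local ring, i.e.\ a DVR, and its only LQT is itself: $\overline{R_n} = \overline{R}$ for all $n$. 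Hence $\overline{S} = \bigcup_n \overline{R_n} = \overline{R}$ is a DVR, and in particular the multiplicity sequence of $(\overline{R},\overline{S})$ is eventually zero, hence convergent — but that is consistent with Lemma~\ref{3.6}(3) failing, which only tells us $S$ is \emph{not} the pullback. To get the contradiction I instead observe: the pullback $\widetilde{S} = S + P R_P$ of $\overline{S} = \overline{R} = R_P/PR_P$ along $R_P \to \kappa(P)$ is simply $R_P$ itself, and by Proposition~\ref{3.5} it is non-archimedean; moreover $S \subseteq \widetilde{S} = R_P$. Now $PR_P$ is a nonzero ideal of $R_P$ contained in $S$ only if... — here is the crux — I would show directly that $S$ cannot be archimedean: pick $0 \ne t \in P$ and $x \in \m_S$ a nonunit; since $\overline{R_n} = \overline{R}$ for all $n$, for large $n$ the element $x$ maps to a unit in $\overline{R_n}$, so $x \notin P R_P \cap R_n$ while $x \in \m_n$, and one shows $t/x^k \in R_n$ for suitable $n$ (dividing $t$ by the successive parameters $x_i$, which is possible since each $\m_n R_{n+1}$ is principal and $x$ is a unit mod $P R_P$), giving $t \in \bigcap_k x^k S \ne 0$, contradicting that $S$ is archimedean.

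The main obstacle is the last step of part (2): making precise that $t/x^k$ lands back in $\bigcup_n R_n$ for each $k$. The key mechanism is exactly the one used in the proof of Lemma~\ref{3.6L} — write $t = g_0$, $x^k = h_0$ and run the division algorithm $g_{n+1} = g_n/x_n$, $h_{n+1} = h_n/x_n$; since $\overline{h_n} = \overline{x}^k$ is a \emph{unit} in $\overline{R_n}$ (because $\overline{R_n} = \overline{R}$ is a DVR and $x$ is a nonunit of $R$ that is not in $P$, so $x$ is a unit mod $P$ after finitely many steps, or one arranges $x$ to be a unit mod $P$ from the start), the valuation-theoretic obstruction $\nu(\overline{h_n})$ is $0$, so no division in the $h$-direction is ever forced; meanwhile $g_n \in P R_P \cap R_n \subseteq \m_n$ is divisible by $x_n$ at every stage, so the process never halts and yields $t/x^k \in R_n$ for all $n$, hence $t/x^k \in S$ for every $k$. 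I expect the bookkeeping — ensuring $x$ (or a unit multiple of it) is a nonunit of $S$ but a unit modulo $P R_P$, and that $P \cap R_n \subseteq \m_n$ at every stage — to be the only delicate part; the structural facts all come from Proposition~\ref{3.5}, Lemma~\ref{3.6L}, and Lemma~\ref{3.6}.
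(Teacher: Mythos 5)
Your part (1) follows the same route as the paper (the correspondence of Proposition~\ref{3.5}, then Proposition~\ref{3.7} and Lemmas~\ref{3.6L} and~\ref{3.6}), but the key existence step is not justified. You need a rank~$1$ valuation ring birationally dominating $\overline{R}=R/P$ with divergent multiplicity sequence, and $R/P$ is in general only a Noetherian local domain: it need not be regular, so it has no ``regular system of parameters'' and no evident ``monomial valuation dividing repeatedly in a fixed direction.'' Remark~\ref{GMR remark} and Proposition~\ref{3.7} do not produce such a valuation; they only describe what happens once you have one. The paper closes this gap with Chevalley's theorem \cite{Che}: every Noetherian local domain is birationally dominated by a DVR, and for a DVR the terms $\nu(\overline{\m_n})$ are positive integers, so divergence is automatic. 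With that substitution your argument for (1) is essentially the paper's (your check that $\dim R_n\ge 2$ is a correct, if minor, addition, modulo the typo $R_P\subsetneq R_n$).

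Your part (2) is a genuinely different strategy from the paper's, which simply combines $\dim R_P=\dim R-1$ with the bound $\dim S\le \dim R-1$ for archimedean Shannon extensions from \cite{HLOST}; you instead try to show directly that $S\subseteq R_P$ with $\dim R/P=1$ forces $S$ to be non-archimedean. That conclusion is correct, but two steps you rely on are false. First, $\dim R/P=1$ does not make $R/P$ a DVR (a quotient of a regular local ring need not be regular), so you may not conclude $\overline{R_n}=\overline{R}$ for all $n$. Second, and fatally for your division argument, no nonunit $x\in\m_S$ can map to a unit in any $\overline{R_n}$: the image of $x$ lies in the maximal ideal of $\overline{S}=\bigcup_n\overline{R_n}$, and $\overline{S}$ dominates each $\overline{R_n}$, so that image is a nonunit of every $\overline{R_n}$; hence ``$\nu(\overline{h_n})=0$ from the start'' is unattainable and ``one arranges $x$ to be a unit mod $PR_P$'' is impossible. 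The repair within your strategy is to argue as follows: by Krull--Akizuki \cite[Theorem 11.7]{Mat}, any valuation ring birationally dominating the one-dimensional domain $R/P$ (in particular one dominating $\overline{S}$, whose LQT sequence from Proposition~\ref{3.5} is precisely the sequence along that valuation) is a DVR, its multiplicity sequence over $R/P$ diverges automatically, Proposition~\ref{3.7} identifies it with $\overline{S}$, and then Lemma~\ref{3.6} (condition (3)) and Theorem~\ref{overview} show that $S=\widetilde{S}$ has Noetherian hull $R_P$ and is non-archimedean, the desired contradiction. As written, however, both halves of your proposal have genuine gaps.
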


\begin{proof}
To prove item~1,  we use a result of Chevalley that every Noetherian local 
domain is birationally dominated by a DVR
\cite{Che}.  Let $V$ be a DVR birationally dominating $R/P$. 
We apply  Lemma~\ref{3.6} with this $R$ and $P$. Let $\{\overline{R_n}\}$ be the sequence of LQTs
of $\overline{R_0}= R/P$ along $V$.   Let $S$ be the union of the corresponding sequence of LQTs 
of $R$ given by Proposition~\ref{3.5}. 
Proposition~\ref{3.7} implies
that  $\overline S = V$  and Lemma~\ref{3.6} implies that 
$S = \widetilde S$ is a non-archimedean Shannon extension with $R_P$ 
as its Noetherian hull. 

For item~2, if $\dim R/P = 1$,  then $\dim R_P = \dim R - 1$ since an RLR is catenary.  
If $S$ is an archimedean Shannon 
extension of $R$,
then $\dim S \le \dim R - 1$  by \cite[Lemma~3.4 and Corollary~3.6]{HLOST}.  Therefore $R_P$
does not contain the 
 Noetherian hull of an archimedean Shannon   extension of $R$ if $\dim R/P = 1$.   
      \qed
\end{proof}
 
 \begin{discussion} \label{4.6disc} 
 Let $P$ be a nonzero nonmaximal prime of a regular local ring $R$ such that  $\dim R/P \ge 2$.  
 We ask:
 
 \noindent
 {\bf Question}:   Does   there exists an
  archimedean  quadratic Shannon extension of $R$ contained in $R_P$? 
  
  The question reduces to the case where $\dim R/P = 2$,  for if $Q$ is a prime ideal of $R$ with
  $\dim R/Q \ge  2$,  then there exists a prime ideal $P$ of $R$ such that $Q \subseteq P$ and 
  $\dim R/P = 2$.    Then  $R_P \subseteq R_Q$.   Hence   a quadratic Shannon extension of $R$ 
  contained in $R_P$ is contained in $R_Q$.  
  
  Assume that $P$ is a nonzero prime ideal of $R$ such that $\dim R/P = 2$.   It is not difficult to see that 
  the 2-dimensional Noetherian local domain $\overline{R_0} = R/P$ is birationally dominated by a 
  rank 1 valuation domain $V$  of rational rank 2.      Consider the 
 infinite sequence of LQTs $\{ \overline{R_n} \}_{n \ge 0}$ of $\overline{R_0} = R / P$  along $V$ and 
 let $\overline S = \bigcup_{n \ge 0}\overline{R_n}$.   Then $\overline S$ is birationally dominated by $V$.  
 Each of the $\overline{R_n}$ is a 2-dimensional Noetherian local domain and $\dim \overline S $ 
 is either $1$ or $2$.

 Let   $\{R_n\}$  be the sequence of LQTs of $R$ given by Proposition~\ref{3.5} that corresponds 
to $\{\overline{R_n}\}$, and let $S = \bigcup_{n}R_n$.  
Then $\dim R_n > 2$ for all $n$.  Hence  $S$ is  
   a quadratic Shannon extension of $R$ and $S \subseteq R_P$.  Let $\p = PR_P \cap S$.
   Then $S/\p = \overline S$. 
   
    If  $\dim \overline S = 1$  then there  are no prime ideals of $S$
   strictly between $\p$  and  $\m_S$.   Since $V$ has rational rank 2,  the multiplicity 
   sequence of $(\overline R, \overline S)$ is convergent.
  Lemma~\ref{3.6} implies that the Noetherian hull of $S$ is not $R_P$.  Hence if  $\dim \overline S = 1$,
  there exists an    archimedean quadratic Shannon extension $S$ of $R$ contained in $R_P$.  
 \end{discussion}


Theorem~\ref{3.8} implies the following:

\begin{corollary}\label{principal cor}   {\em (Lipman \cite[Lemma 1.21.1]{Lip})}  Let $P$ be  a nonmaximal prime ideal of
a regular local ring $R$.   Then there  exists a  quadratic Shannon extension  of $R$ contained in $R_P$. 
\end{corollary}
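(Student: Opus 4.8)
The plan is to read this off directly from Theorem~\ref{3.8}, which already manufactures non-archimedean quadratic Shannon extensions with a prescribed Noetherian hull. The only mismatch to reconcile is that Theorem~\ref{3.8}(1) is stated for a \emph{nonzero} nonmaximal prime, while Corollary~\ref{principal cor} also permits $P=0$ (where $R_P=F$ is the quotient field of $R$). Since a quadratic Shannon extension is by construction an overring of its base lying inside its Noetherian hull $T$ (indeed $S=V\cap T\subseteq T$ by Theorem~\ref{hull}(2)), producing an $S$ with Noetherian hull contained in $R_P$ is enough.

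First I would dispose of the case $P\neq 0$. Then $P$ is a nonzero nonmaximal prime of the regular local ring $R$, so Theorem~\ref{3.8}(1) applies verbatim and yields a non-archimedean quadratic Shannon extension $S$ of $R$ whose Noetherian hull is $R_P$; hence $S\subseteq R_P$, as desired. (Note that having a nonzero nonmaximal prime forces $\dim R\ge 2$ by catenarity, so Definition~\ref{1.1} is not vacuously obstructed.)

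Next I would handle $P=0$, so $R_P=F$. The existence of a nonmaximal prime forces $R$ to be non-field, and since a quadratic Shannon extension only exists over a base of Krull dimension $\ge 2$, we may assume $\dim R\ge 2$; then $R$ has a nonzero nonmaximal prime ideal $Q$ (for instance a height-one prime). Applying Theorem~\ref{3.8}(1) to $Q$ produces a quadratic Shannon extension $S$ of $R$ with Noetherian hull $R_Q$, and since $R_Q\subseteq F=R_P$ we again conclude $S\subseteq R_P$.

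I do not expect a genuine obstacle here: the content is entirely contained in Theorem~\ref{3.8} and the structural results of Section~\ref{section 3.5}, and the proof is pure bookkeeping — matching the corollary's hypothesis (nonmaximal prime, possibly zero) to Theorem~\ref{3.8}'s hypothesis (nonzero prime), and invoking the elementary fact that a Shannon extension sits inside its Noetherian hull, which in turn sits inside the relevant localization of $R$.
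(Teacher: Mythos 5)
Your proposal is correct and matches the paper, which derives the corollary directly from Theorem~\ref{3.8}(1) (a Shannon extension with Noetherian hull $R_P$ lies in $R_P$ since $S=V\cap T\subseteq T=R_P$ by Theorem~\ref{hull}). Your extra bookkeeping for $P=0$, reducing to a nonzero nonmaximal prime under the standing assumption $\dim R\ge 2$, is a harmless refinement of the same argument.
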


In Theorem~\ref{pullback thm}
we use Lemma~\ref{3.6} to characterize the overrings of a regular local ring $R$ 
that are Shannon extensions of $R$ with Noetherian hull $R_P$, where $P$ is a nonzero nonmaximal prime ideal of $R$. Note that by Theorem~\ref{overview} such a Shannon extension is necessarily non-archimedean. 

\begin{theorem}[Shannon Extensions with Specified Local Noe\-th\-erian Hull]   \label{pullback thm}
Let $P$ be a nonzero   nonmaximal prime ideal of  a regular local ring $R$.
%
%
The quadratic Shannon extensions of $R$ with Noetherian hull $R_P$ are precisely the rings $S$ such that $S$ is a pullback along the residue map  $\alpha:R_P \rightarrow \kappa(P)$ 
 of  a rational rank $1$ valuation ring birationally dominating $R/P$ whose multiplicity sequence  is divergent.
\begin{center}
\begin{tikzcd}
   S = \alpha^{-1}( {\cal V})\arrow[twoheadrightarrow]{r}\arrow[hookrightarrow]{d} 
  &  {\cal V}\arrow[hookrightarrow]{d}
  \\ 
   R_P\arrow[twoheadrightarrow]{r}{\alpha} 
 &  \kappa(P)
\end{tikzcd}
\end{center}
\end{theorem}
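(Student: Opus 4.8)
The plan is to prove the two directions of the claimed equivalence by assembling the pieces already developed in the section. In one direction, suppose $S$ is a quadratic Shannon extension of $R$ with Noetherian hull $R_P$. By Theorem~\ref{overview}, since the Noetherian hull $R_P$ is local, $S$ is non-archimedean; moreover $Q = PR_P$ is a common ideal of $S$ and $R_P$, and $S/Q$ is a rational rank $1$ valuation domain on $\kappa(P) = R_P/PR_P$. Since $S$ sits inside $R_P$ and contains $PR_P$, it equals the preimage under $\alpha\colon R_P \to \kappa(P)$ of $S/PR_P$; that is, $S$ is the pullback of the valuation ring $\mathcal V := S/PR_P$ along $\alpha$. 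It remains to identify $\mathcal V$ as a valuation ring birationally dominating $R/P$ with divergent multiplicity sequence. First I would note that the sequence $\{R_n\}$ defining $S$ is along $R_P$ (since $S \subseteq R_P$), so Proposition~\ref{3.5} applies and $\{\overline{R_n}\}$ is the induced sequence of LQTs of $R/P$ with $\overline S = \bigcup \overline{R_n} = S/PR_P = \mathcal V$. Then Lemma~\ref{3.6}, specifically the implication (2)$\Rightarrow$(3), gives that $\overline S$ is a rank $1$ valuation ring whose multiplicity sequence relative to $\overline R = R/P$ is divergent, and the final clause of Lemma~\ref{3.6} gives rational rank $1$. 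That $\mathcal V$ birationally dominates $R/P$ is immediate since it is a directed union of the $\overline{R_n}$, each birationally dominating $R/P$.

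For the converse, suppose $S = \alpha^{-1}(\mathcal V)$ where $\mathcal V$ is a rational rank $1$ valuation ring birationally dominating $R/P$ with divergent multiplicity sequence. Let $\{\overline{R_n}\}$ be the sequence of LQTs of $\overline{R_0} = R/P$ along $\mathcal V$; by Proposition~\ref{3.7} (applicable since the multiplicity sequence diverges), $\bigcup_n \overline{R_n} = \mathcal V$. Apply Proposition~\ref{3.5} to lift $\{\overline{R_n}\}$ to a sequence $\{R_n\}$ of LQTs of $R$ along $R_P$, and set $S' = \bigcup_n R_n$; since $\dim R_n = \dim R_P + \dim \overline{R_n} \ge 1 + 2 \ge 2$ (using that $\mathcal V$, having rational rank $\ge 1$ but being a valuation ring dominating the Noetherian local domain $R/P$ of dimension $\ge 1$, forces $\dim \overline{R_n}$ to stabilize at a value $\ge 1$; in fact one needs $\dim \overline{R_n} \ge 1$, which holds since $\mathcal V$ properly contains $\overline{R_n}$ unless $\overline{R_n}$ is itself a DVR, in which case $\dim \overline{R_n} = 1$ still), $S'$ is a quadratic Shannon extension of $R$ in the sense of Definition~\ref{1.1}. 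Here I should be careful: I must confirm $\dim R_n \ge 2$ for all $n$, which follows because $R_P$ has dimension $\ge 1$ and $R_n$ dominates $R_P \cap R_n$ with $R_n/(PR_P \cap R_n) = \overline{R_n}$ of dimension $\ge 1$. Then Lemma~\ref{3.6}, implication (3)$\Rightarrow$(1) (equivalently the statement $S' = \widetilde S$ of Lemma~\ref{3.6L}), shows $S'$ equals the pullback $\widetilde S$ of $\overline S = \mathcal V$ along $\alpha$, which is exactly $S$. Finally, Lemma~\ref{3.6} implication (1)$\Rightarrow$(2) gives that the Noetherian hull of $S = S'$ is $R_P$, as required.

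The one genuinely delicate point — the main obstacle — is the bookkeeping around dimensions and the hypothesis $\dim R/P \ge \cdot$: in the converse direction one must verify that the lifted sequence $\{R_n\}$ genuinely defines a \emph{quadratic} Shannon extension, i.e. that each $R_n$ is a regular local ring of dimension $\ge 2$, so that Definition~\ref{1.1} applies and the machinery of Theorem~\ref{overview} and Lemma~\ref{3.6} is available. Since $R$ is regular and each $R_n$ is obtained from $R$ by iterated LQTs along the valuation ring $\alpha^{-1}(\mathcal V)$ (equivalently along $R_P$), each $R_n$ is again regular local by \cite[Corollary 38.2]{Nag}. For the dimension bound, note $\dim R_n = \operatorname{ht}(PR_P \cap R_n) + \dim \overline{R_n}$; the first summand equals $\dim R_P \ge 1$ since $R_P$ is a localization of $R_n$ (as in the $\square^*$ discussion), and $\dim \overline{R_n} \ge 1$ because $\mathcal V$ is a valuation overring, not equal to its own residue field, dominating $\overline{R_n}$. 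Hence $\dim R_n \ge 2$. Everything else is a matter of citing Propositions~\ref{3.5} and~\ref{3.7} and Lemma~\ref{3.6} in the right order; no new computation is needed beyond what those results supply.
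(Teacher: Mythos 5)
Your proposal is correct and follows essentially the same route as the paper: the forward direction is Lemma~\ref{3.6} (whose proof runs through Theorem~\ref{overview}, as you spell out), and the converse lifts the LQT sequence of $R/P$ along $\mathcal V$ via Proposition~\ref{3.5}, identifies $\bigcup_n\overline{R_n}=\mathcal V$ by Proposition~\ref{3.7}, and concludes with Lemma~\ref{3.6L}/Lemma~\ref{3.6}. Your extra verification that each lifted $R_n$ is regular of dimension $\ge 2$ (via $(R_n)_{PR_P\cap R_n}=R_P$ and $\dim\overline{R_n}\ge 1$) is a detail the paper leaves implicit, and it is correct.
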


\begin{proof}
	If $S$ is a quadratic Shannon extension with Noetherian hull $R_P$, then by Lemma~\ref{3.6},  $S$ is a pullback along the map  $R_P \rightarrow \kappa(P)$ 
 of a  rational rank $1$ valuation ring birationally dominating $R/P$ whose multiplicity sequence  is divergent.

	Conversely, let $S$ be such a pullback. 
	Let $\{ \overline{R_n} \}_{n \ge 0}$ denote the sequence of LQTs of $\overline{R_0} = R / P$ along $\mathcal{V}$ and let $\{ R_n \}_{n \ge 0}$ denote the induced sequence of LQTs of $R_0 = R$ as in Proposition~\ref{3.5}.
	Then Lemma~\ref{3.6} implies that $S = \bigcup_{n \ge 0} R_n$,
	so $S$ is a quadratic Shannon extension.
\end{proof}



\begin{corollary}
	Let $P$ be a prime ideal of the regular local ring $R$ with $\dim R / P = 1$.
	\begin{description}[(1)]
    \item[\em{(1)}]
      The quadratic Shannon extensions of  $R$ with Noetherian hull $R_P$ 
      are precisely the pullbacks along the residue map $R_P \rightarrow \kappa (P)$ of the finitely many DVR overrings $\mathcal{V}$ of $R/P$.
    \item[\em{(2)}]
      If $R/P$ is a DVR, then $R+PR_P$ is the unique quadratic Shannon extension of $R$ 
      with Noetherian hull $R_P$.
  \end{description}
\end{corollary}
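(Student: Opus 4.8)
The plan is to derive both parts from Theorem~\ref{pullback thm}. Since $\dim R/P=1$ the prime $P$ is nonmaximal, and we may assume $\dim R\ge 2$ (otherwise $R$ has no quadratic Shannon extensions and there is nothing to prove), whence $\hgt P=\dim R-1\ge 1$ and $P\ne 0$; thus Theorem~\ref{pullback thm} applies and says that the quadratic Shannon extensions of $R$ with Noetherian hull $R_P$ are exactly the pullbacks along the residue map $\alpha\colon R_P\to\kappa(P)$ of the rational rank $1$ valuation rings $\mathcal V$ birationally dominating $R/P$ whose multiplicity sequence is divergent. The whole of~(1) therefore reduces to the claim that, because $R/P$ is one-dimensional Noetherian local, this class of valuation rings coincides with the set of DVR overrings of $R/P$, together with the assertion that this set is finite.

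For the coincidence I would argue in two directions. If $\mathcal V$ is a valuation ring with $R/P\subseteq\mathcal V\subsetneq\kappa(P)$, then $\mathcal V$ lies between the one-dimensional Noetherian domain $R/P$ and its fraction field $\kappa(P)$, so by the Krull--Akizuki theorem $\mathcal V$ is Noetherian of Krull dimension $\le 1$; being a Noetherian valuation ring that is not a field, $\mathcal V$ is a DVR, and since $R/P$ has only the primes $0$ and $\m_{R/P}$, its center on $R/P$ cannot be $0$ (else $\mathcal V=\kappa(P)$) and so equals $\m_{R/P}$, i.e.\ $\mathcal V$ birationally dominates $R/P$ and has rational rank $1$. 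Conversely, if $\mathcal V$ is a DVR overring of $R/P$ with normalized valuation $\nu$, then in the LQT sequence $\{\overline{R_n}\}$ of $R/P$ along $\mathcal V$ each maximal ideal $\m_{\overline{R_n}}$ is nonzero and contained in $\m_{\mathcal V}$, so $\nu(\m_{\overline{R_n}})$ is a positive integer for every $n$; hence $\sum_{n\ge 0}\nu(\m_{\overline{R_n}})=\infty$ and the multiplicity sequence of $(R/P,\mathcal V)$ is divergent. This identifies the two classes.

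For finiteness, observe that any DVR overring $\mathcal V$ of $R/P$ contains the integral closure $A'$ of $R/P$ in $\kappa(P)$, which by Krull--Akizuki is a one-dimensional Noetherian normal domain, hence a Dedekind domain; therefore $\mathcal V=A'_{\mathfrak n}$ with $\mathfrak n=\m_{\mathcal V}\cap A'$, and $\mathcal V\mapsto\mathfrak n$ embeds the DVR overrings of $R/P$ into $\operatorname{Max}(A')$. Fixing a nonzero $a\in\m_{R/P}$, every maximal ideal of $A'$ lies over $\m_{R/P}$ (by lying over for the integral extension $R/P\subseteq A'$) and so contains $a$; since $A'$ is Dedekind, $A'/aA'$ is Artinian with only finitely many primes, so $A'$ is semilocal and the DVR overrings of $R/P$ are finite in number. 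Combined with the preceding paragraph and Theorem~\ref{pullback thm}, this proves~(1).

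For~(2), if $R/P$ is a DVR then it is integrally closed, so $A'=R/P$ has a unique maximal ideal and $R/P$ is its own only DVR overring; by~(1) there is a unique quadratic Shannon extension $S$ of $R$ with Noetherian hull $R_P$, namely $S=\alpha^{-1}(R/P)$. Since $\alpha|_R$ has kernel $R\cap PR_P=P$, the image $\alpha(R)$ is the canonical copy of $R/P$ inside $\kappa(P)$, so $\alpha^{-1}(R/P)=\alpha^{-1}(\alpha(R))=R+\ker\alpha=R+PR_P$, giving $S=R+PR_P$. The only steps requiring genuine care are the two appeals to Krull--Akizuki (that one-dimensionality forces every valuation overring to be a DVR and forces the normalization to be Dedekind) and the semilocality of $A'$; the remainder is formal bookkeeping with the pullback square.
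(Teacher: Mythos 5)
Your argument is correct and is essentially the paper's own proof: both rest on Theorem~\ref{pullback thm} together with the Krull--Akizuki theorem, which forces every valuation overring of the one-dimensional domain $R/P$ to be one of finitely many DVRs, and then identify the pullback in part (2) as $R+PR_P$. You merely make explicit some steps the paper leaves implicit (that a DVR overring automatically has divergent multiplicity sequence, and the finiteness via the Dedekind normalization), so no substantive difference remains.
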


\begin{proof}
	The Krull-Akizuki Theorem \cite[Theorem 11.7]{Mat}  implies  that $R/P$ has finitely many valuation overrings, 
	each of which is a DVR. By  Theorem~\ref{pullback thm}  there is a one-to-one correspondence between 
	these  DVRs and the Shannon extensions of $R$ with Noetherian hull $R_P$. This proves item 1.  
	If  $R/P$ is a DVR, then by item 1, the pullback $R+PR_P$ of $R/P$ along the map $R_P \rightarrow \kappa(P)$ is the unique quadratic Shannon extension of $R$ with Noetherian hull $R_P$.  This verifies item 2. 
  \qed
\end{proof}

\section{Classification of non-archimedean  Shannon extensions}

In  Theorem~\ref{pullback cor} we classify  the non-archimedean 
quadratic Shannon extensions $S$  that occur as overrings of a given regular local ring   $R$. The classification is extrinsic to $S$ in the sense that a prime ideal of  an iterated quadratic transform of $R$ is needed for the description of the overring $S$ as a pullback.   In Theorem~\ref{function field} we give an intrinsic rather than extrinsic  
characterization of certain of  the non-archimedean quadratic  Shannon extensions with principal maximal ideal 
that occur   in an algebraic function  field of characteristic $0$. In this case, we are able 
 to characterize such rings in terms of pullbacks without the explicit requirement of a regular local ``underring'' of $S$.
  This allows us to give an additional source of examples of
   non-archimedean  quadratic Shannon extensions in Example~\ref{pullback example}.

\begin{theorem}[Classification of non-archimedean  Shannon extensions]  
\label{pullback cor}
 Let $R$ be a regular local ring with $\dim R \geq 2$, and let $S$ be an 
	overring of $R$. Then $S$ is a non-archimedean quadratic Shannon extension of $R$ if and only if 
	there is a ring ${\cal V}$, a nonnegative integer $i$ and a prime ideal $P$ of $R_i$ such that 
	\begin{description}[(a)]
	\item[{\em (a)}] ${\cal V}$ is  a rational rank $1$ 
valuation ring  of $\kappa(P)$ that contains the image of $R_i/P$ in $\kappa(P)$  and has divergent   multiplicity sequence over this image, and 
\item[{\em (b)}] $S$ is a pullback of ${\cal V}$ along the residue map $\alpha:(R_i)_P \rightarrow \kappa(P)$.  
\end{description} 
\begin{center}
\begin{tikzcd}
   {S} = \alpha^{-1}( \cal{V})\arrow[twoheadrightarrow]{r}\arrow[hookrightarrow]{d} 
  &  {\cal V}\arrow[hookrightarrow]{d}
  \\ 
   (R_i)_P\arrow[twoheadrightarrow]{r}{\alpha} 
 &  \kappa(P)
\end{tikzcd}
\end{center}

\end{theorem}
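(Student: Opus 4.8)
\noindent\emph{Approach.} The plan is to transfer the problem to a sufficiently advanced iterated quadratic transform $R_i$ and to invoke the ``local Noetherian hull'' forms of the statement already in hand --- Theorem~\ref{hull}, Lemma~\ref{3.6}, and Theorem~\ref{pullback thm} together with Proposition~\ref{3.5}. The point I would keep in mind throughout is that a quadratic Shannon extension $S$ of $R$ is simultaneously a quadratic Shannon extension of each $R_i$ in its defining sequence, with the same Noetherian hull (the latter being intrinsic to $S$ by Theorem~\ref{hull}(5)).

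\smallskip
\noindent\emph{The ``only if'' direction.} Starting from a non-archimedean quadratic Shannon extension $S=\bigcup_{n\ge 0}R_n$ of $R$, I would first note that $\dim S\ge 2$, since a rank $1$ valuation ring is archimedean and so Theorem~\ref{hull}(1) forces $\dim S\neq 1$; this makes Theorem~\ref{overview} available. Then I would use Theorem~\ref{hull}(4) to fix a large $i$ with Noetherian hull $T=(R_i)_P$ for a prime $P$ of $R_i$, and check that $P$ is nonzero (else $T=F$, contradicting $\dim T=\dim S-1\ge 1$ from Theorem~\ref{hull}(3)) and nonmaximal (else $T=R_i\subsetneq R_{i+1}\subseteq S\subsetneq T$). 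Since the tail $R_i\subseteq R_{i+1}\subseteq\cdots$ presents $S$ as a quadratic Shannon extension of the regular local ring $R_i$ (all these rings have dimension $\ge 2$ by Definition~\ref{1.1}) that is along $(R_i)_P$ because $S\subseteq T=(R_i)_P$, I could apply Lemma~\ref{3.6} with $R_i$ in place of $R$: its condition~(2) holds by construction, so its conditions~(1) and~(3) hold and $\overline S:=\bigcup_n\overline{R_n}$ has rational rank $1$. Finally I would set $\mathcal{V}=\overline S$; condition~(1) is exactly the pullback assertion~(b), and using $P(R_i)_P\cap R_i=P$ (hence $\overline{R_i}=R_i/P$) together with condition~(3) gives~(a).

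\smallskip
\noindent\emph{The ``if'' direction.} Given $\mathcal{V}$, $i$, $P$ satisfying~(a) and~(b), I would first record that $R_i$ is a regular local ring, that $P$ is nonmaximal (otherwise the image of $R_i/P$ in $\kappa(P)$ is all of $\kappa(P)$, leaving no room for a proper rank $1$ valuation ring $\mathcal{V}$), and that I may take $P\ne 0$ (for $P=0$ the residue map $\alpha$ is an isomorphism and $S=\mathcal{V}$ is a rank $1$, hence archimedean, valuation ring, so that case is vacuous). Then $\hgt P\ge 1$ and $\dim R_i/P\ge 1$, so $\dim R_i\ge 2$; and $\dim R_j\ge\dim R_i\ge 2$ for all $j\le i$ since quadratic transforms of regular local rings do not increase dimension. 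Now hypothesis~(a) is precisely the hypothesis of Theorem~\ref{pullback thm} for the pair $(R_i,P)$, so that theorem identifies the pullback $S=\alpha^{-1}(\mathcal{V})$ with $\bigcup_{n\ge 0}R_n$ for the LQT sequence of $R_i$ induced by $\mathcal{V}$ via Proposition~\ref{3.5}; prepending $R=R_0\subseteq\cdots\subseteq R_i$ then exhibits $S$ as a quadratic Shannon extension of $R$. To see that $S$ is non-archimedean I would use Proposition~\ref{3.5}: $S=R_i+P(R_i)_P$, so $P(R_i)_P$ is a nonzero common ideal of $S$ and the overring $(R_i)_P$, whence $\bigcap_{k>0}x^kS\supseteq P(R_i)_P\ne 0$ for any $x\in\m_S\setminus P(R_i)_P$.

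\smallskip
\noindent\emph{Main obstacle.} I expect the crux to be the reduction in the ``only if'' direction: realizing through Theorem~\ref{hull}(4) that the Noetherian hull is literally a localization $(R_i)_P$ of a genuine iterated quadratic transform, confirming that the tail of the defining sequence presents $S$ as a quadratic Shannon extension of that $R_i$ sitting inside $(R_i)_P$, and then letting Lemma~\ref{3.6} do the work. The ``if'' direction should be a repackaging of Theorem~\ref{pullback thm} and Proposition~\ref{3.5}; the only care needed there is the bookkeeping that rules out $P$ maximal and the degenerate case $P=0$ and checks $\dim R_i\ge 2$ so that $R_i$ is an admissible base ring.
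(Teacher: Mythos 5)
Your overall route is the same as the paper's: in the forward direction the paper also uses Theorem~\ref{overview} to see the Noetherian hull is local and Theorem~\ref{hull}(4) to write it as $(R_i)_P$, then applies Theorem~\ref{pullback thm} (whose content is Lemma~\ref{3.6}) to the tail sequence viewed over $R_i$; in the converse it again quotes Theorem~\ref{pullback thm} and deduces non-archimedeanness from Theorem~\ref{overview}. Your added verifications are correct and are exactly the details the paper leaves implicit: that $\dim S\ge 2$ so Theorem~\ref{overview} applies, that the hull is intrinsic to $S$, that the $P$ produced in the forward direction is nonzero (else $\dim T=0$) and nonmaximal (else $R_i\subsetneq S\subseteq T=R_i$), that $\dim R_j\ge 2$ along the prepended sequence, and the common-ideal argument for non-archimedeanness (which mirrors the proof of Proposition~\ref{3.5}; note only that Proposition~\ref{3.5} gives $S=\bigl(\bigcup_n R_n\bigr)+P(R_i)_P$ rather than $R_i+P(R_i)_P$ — harmless, since all you use is $P(R_i)_P\subseteq S$).

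The one step that does not hold up is your dismissal of $P=0$ in the converse as ``vacuous.'' That case is not vacuous: if $P=0$, then $\alpha$ is the identity on $F$ and conditions (a)--(b) say precisely that $S={\cal V}$ is a rational rank $1$ valuation ring birationally dominating $R_i$ with divergent multiplicity sequence. Such rings exist — for instance any DVR dominating $R_i$ birationally has divergent multiplicity sequence, since each $\nu(\m_n)\ge 1$, and non-divisorial ones (as in Example~\ref{5.3ex}) have infinite LQT sequences — and by Proposition~\ref{3.7} such an $S$ is then a quadratic Shannon extension of $R$, yet it is \emph{archimedean}, being of rank $1$. So in the ``if'' direction you are not free to ``take $P\ne 0$'': if $P=0$ were admitted, the implication would literally fail. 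The correct handling is to read $P$ as nonzero and nonmaximal — this is forced by the hypotheses of Theorem~\ref{pullback thm} (and of Lemma~\ref{3.6}), which both you and the paper invoke, and it is automatically satisfied by the $P$ produced in the forward direction; the paper's own proof assumes it silently when citing Theorem~\ref{pullback thm}. Nonmaximality you did derive from (a); nonzeroness cannot be derived from (a) and (b) as stated and must be taken as part of the hypothesis, so replace the ``vacuous'' remark accordingly.
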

\begin{proof} Suppose $S$ is a non-archimedean quadratic Shannon extension, and let $\{R_i\}$ be the sequence of iterated  QDTs such that $S = \bigcup_{i}R_i$. By Theorem~\ref{overview}, the Noetherian hull $T$ of $S$ is a local ring, and by Theorem~\ref{hull}, there is $i>0$ and a prime ideal $P$ of $R_i$ such that $T = (R_i)_P$.    Since $S$ is a non-archimedean quadratic Shannon extension of $R_i$, 
Theorem~\ref{pullback thm}  implies there is a valuation ring ${\cal V}$ such that (a) and (b) hold for $i$, $P$,  $S$ and ${\cal V}$.  

Conversely, suppose there is a ring  ${\cal V}$, a nonnegative integer $i$ and a prime ideal $P$ of $R_i$ 
 that satisfy (a) and (b).  
 By Theorem~\ref{pullback thm}, $S$ is a quadratic Shannon extension of $R_i$ with Noetherian hull $(R_i)_P$.  Thus $S$ is a quadratic Shannon extension of $R$ that is non-archimedean by Theorem~\ref{overview}.   \qed
\end{proof}


In contrast to Theorem~\ref{pullback cor}, the pullback description in Theorem~\ref{function field} is without reference to a specific regular local underring of $S$. Instead, the 
  proof  constructs one using resolution of singularities. 
 Because our use of this technique  is elementary, we  frame our proof in terms of projective models rather than projective schemes. 
  For more background on projective models, see  \cite[Sections 1.6 - 1.8]{Abh2} and \cite[Chapter VI, \S 17]{ZS}. 
  Let $F$ be a field and  let $k$ be a subfield of $F$.  
  Let $t_0 = 1$ and assume 
  that $t_1,\ldots,t_n$ are nonzero elements of $ F$ such that $F = k(t_1, \ldots, t_n)$.   
 For each $i \in \{0,1,\ldots, n\}$, define $D_i = k[t_0/t_i,\ldots,t_n/t_i]$.  
 The {\it projective model} of $F/k$ with respect to $t_0,\ldots,t_n$ 
 is the collection of local rings given by 
 $$X = \{(D_i)_P:i \in \{0,1,\ldots,n\}, \: P \in \Spec(D_i)\}.$$  

  If $k$ has characteristic $0$, then by resolution of singularities (see for example \cite[Theorem 6.38, p.~100]{Cut})  there is a projective model $Y$ of $F/k$ such that every regular local ring in $X$ is in $Y$,  every local ring in $Y$ is a regular local ring,   and every local ring in $X$ is dominated by  a (necessarily regular) local ring in $Y$.   
  
 By a {\it valuation ring of $F/k$} we mean a valuation ring $V$ with quotient field $F$ such that $k$ is a subring of $V$.

\begin{theorem} \label{function field}
	Let $S$ be a local domain containing as a  subring a field $k$ of characteristic $0$.   Assume that $\dim S \geq 2$ and that the quotient field $F$ of $S$ is a finitely generated extension of $k$. Then  the following are equivalent:
	\begin{description}[(2)]
  	\item[{\em (1)}]
    	$S$ has a principal maximal ideal and $S$ is a quadratic Shannon extension 
    	of a regular local ring $R$  that is essentially finitely generated over $k$.  
  	\item[{\em (2)}] 
  	  There is a regular local overring $A$ of $S$  and a DVR ${\cal V}$ of $(A/\m_A)/k$   such that
	  \begin{description}[(a)]
	  \item[{\em (a)}] 
	   {\rm tr.deg}$_k \: A/{\ff m}_A  + \dim A  = $ {\rm tr.deg}$_k \:F$, 
	and   
	\item[{\em (b)}] 
	  $S$ is the pullback of ${\cal V}$ along the residue map $\alpha:A \rightarrow A/{\ff m}_A$.   
		\end{description}
\end{description} 
\begin{center}
\begin{tikzcd}
   {S} = \alpha^{-1}( {\cal V})\arrow[twoheadrightarrow]{r}\arrow[hookrightarrow]{d} 
  &  {\cal V}\arrow[hookrightarrow]{d}
  \\ 
   A\arrow[twoheadrightarrow]{r}{\alpha} 
 &  A/{\ff m}_A
\end{tikzcd}
\end{center}
\end{theorem}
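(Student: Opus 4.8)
The plan is to prove the equivalence $(1)\Leftrightarrow(2)$ by treating the two implications separately. The implication $(1)\Rightarrow(2)$ is essentially bookkeeping with the structure theory already available (Theorems~\ref{hull} and~\ref{overview}): $A$ will be the Noetherian hull of $S$ and $\mathcal{V}$ the residue ring $S/Q$. The implication $(2)\Rightarrow(1)$ is the substantive one; it requires manufacturing a regular local underring $R$ of $S$ that is essentially of finite type over $k$, and this is where resolution of singularities in characteristic $0$ enters, through the projective models $X$ and $Y$ of $F/k$ introduced above.

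For $(1)\Rightarrow(2)$ I would first observe that $S$ must be non-archimedean: if it were archimedean, then writing $\m_S=xS$, the equality $\bigcap_n x^nS=0$ forces every nonzero element of $\m_S$ to equal $x^m u$ for some $m$ and some unit $u$, so $S$ is a rank $1$ discrete valuation ring, contradicting $\dim S\ge 2$. By Theorem~\ref{overview} the ideal $Q=\bigcap_n x^nS$ is then a nonzero prime, $T=S_Q$ is the Noetherian hull, $Q=QS_Q=\m_T$, and $S/Q$ is a rational rank $1$ valuation domain on $T/Q$; since $\m_S$ is principal so is its image in $S/Q$, whence $S/Q$ is a DVR. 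I would then set $A:=T$ and $\mathcal{V}:=S/Q$. By Theorem~\ref{hull}, $T=(R_i)_P$ for $i\gg0$ and a prime $P$ of $R_i$, so $A$ is a regular local overring of $S$ with $A/\m_A=\kappa(P)$, and $\mathcal{V}$ is a DVR of $(A/\m_A)/k$ (since $k\subseteq S$ and $k\cap Q=0$). As $Q\subseteq S\subseteq T=A$ and $S/Q=\mathcal{V}$, we get $S=\alpha^{-1}(\mathcal{V})$, which is (b). For (a): each $R_i$ is essentially of finite type over $k$ (an iterated local quadratic transform of $R$), so the dimension formula for finitely generated domains over a field gives ${\rm tr.deg}_k\,\kappa(P)+{\rm ht}\,P={\rm tr.deg}_k\,F$; since $\dim A=\dim(R_i)_P={\rm ht}\,P$, this is exactly (a).

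For $(2)\Rightarrow(1)$ the easy point comes first: the maximal ideal of $S$ is principal, because if $t\in S$ lifts a uniformizer of $\mathcal{V}$ then $t\notin\m_A$, so $t$ is a unit of $A$ and $\m_A=t^{-1}\m_A\subseteq tS$, whence $\m_S=tS+\m_A=tS$; moreover $\bigcap_n\m_S^n=\bigcap_n t^nS=\m_A$, so $A=S_{\m_A}$ is intrinsic to $S$. To finish, by Theorem~\ref{pullback thm} it suffices to produce a regular local ring $R$ essentially of finite type over $k$, an iterated local quadratic transform $R_i$ of $R$, and a prime $P$ of $R_i$, with $(R_i)_P=A$ and with $\mathcal{V}$ birationally dominating the image of $R_i/P$ in $\kappa(P)=A/\m_A$: then, since a DVR always has divergent multiplicity sequence (each term is a positive integer), Theorem~\ref{pullback thm} identifies $S=\alpha^{-1}(\mathcal{V})$ with a quadratic Shannon extension of $R_i$, hence of $R$, and Theorem~\ref{overview} shows it is non-archimedean. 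To build $R$, I would form the composite valuation $v=\mathcal{V}\circ w$ of $F/k$, where $w$ is the rank $1$ monomial valuation on $A$ attached to a regular system of parameters of $A$; then $v$ has rank at most $2$, it dominates $S$, and $\mathcal{O}_v\cap A=S$. Choosing generators $t_1,\dots,t_n$ of $F$ over $k$ and, by resolution of singularities in characteristic $0$, a projective model $Y$ of $F/k$ whose local rings are all regular and which dominates the associated model $X$, I would take $R_0$ to be the center of $v$ on $Y$ — a regular local ring essentially of finite type over $k$, dominated by $v$ and contained in $S$ — and, refining $Y$ if necessary, arrange $\dim R_n\ge 2$ along the local quadratic transform sequence $\{R_n\}$ of $R_0$ along $v$. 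One then checks that $\bigcup_n R_n=S$: the rank $1$ (divisorial) coarsening of $v$ accounts for the first, divergent, stretch of multiplicities, while the residual rank $1$ valuation on the residue field is $\mathcal{V}$, whose multiplicity sequence diverges by Proposition~\ref{3.7}; the Noetherian hull of the resulting quadratic Shannon extension is then a localization $(R_i)_P=A$, with $\mathcal{V}$ the induced DVR, and Theorem~\ref{pullback thm} applies.

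The hard part, which I expect to be the main obstacle, is exactly this construction in $(2)\Rightarrow(1)$ — equivalently, the fact that the intrinsic regular local ring $A$ is essentially of finite type over $k$, which is \emph{not} a formal consequence of hypothesis (a) alone: when $\dim A=2$ it follows from Abhyankar's factorization of birational maps between nonsingular surfaces, but for $\dim A\ge 3$ it genuinely requires resolution of singularities. Making the matching of multiplicity sequences precise — so that the local quadratic transform tower of $R_0$ along $v$ reconstructs $S$ and pins down its Noetherian hull as $A$ — is the technical heart of the argument, and is where the projective model $Y$ dominating $X$ does the real work.
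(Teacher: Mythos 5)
Your direction (1)$\Rightarrow$(2) is essentially the paper's argument (take $A=S_Q$, ${\cal V}=S/Q$ via Theorems~\ref{hull} and~\ref{overview}, then the Dimension Formula applied to $A=(R_i)_{P\cap R_i}$ for $i\gg0$), and your derivation of the principal maximal ideal in (2)$\Rightarrow$(1) also matches. The genuine gap is in the rest of (2)$\Rightarrow$(1). First, you never establish that $A$ is essentially of finite type over $k$; in fact you assert that this ``is not a formal consequence of hypothesis (a)'' and ``genuinely requires resolution of singularities,'' whereas the paper deduces it from (a) without any resolution: one takes a finitely generated normal $k$-subalgebra $D\subseteq A$ with quotient field $F$ containing lifts of a transcendence basis of $A/\m_A$, uses the Dimension Formula \cite[Theorem 15.6]{Mat} to get $\dim D_p=\dim A$ for $p=\m_A\cap D$, and then applies the Heinzer--Huneke--Sally criterion for spots \cite{HHS} to conclude $A$ is essentially finitely generated over $k$. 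Without this step your construction cannot start: if $A$ does not lie on a projective model of $F/k$, there is no reason the center of your composite valuation $v$ on a nonsingular model $Y$ has anything to do with $A$ or with $S$, and condition (a) is precisely what rules out examples like Example~\ref{5.3ex}, where the analogous $A$ fails to be essentially of finite type over $k$.

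Second, even granting a nonsingular $Y$, your argument that the LQT sequence $\{R_n\}$ of the center of $v$ along $v$ satisfies $\bigcup_n R_n=S$ with Noetherian hull $A$ is only a heuristic about ``stretches of multiplicities''; this is exactly the crux and it is not proved (a union of LQTs along a rank~$2$ valuation need not recover $\mathcal{O}_v\cap A$, and you never verify $(R_i)_{P\cap R_i}=A$, which is what Theorem~\ref{pullback thm} needs). The paper instead chooses coordinates $x_0,\dots,x_n\in S$ for $Y$ (with $Y$ containing every regular local ring of the model $X$ carrying $A$, so $A\in Y$), uses that $S/P$ is a valuation ring and that every $S$-fractional ideal inside $A=S_P$ is comparable to $P$ to find $j$ with $D_j=k[x_0/x_j,\dots,x_n/x_j]\subseteq S$, sets $R=(D_j)_{\m_S\cap D_j}$, and then identifies $A=R_{P\cap R}$ via the Valuative Criterion of Properness \cite[Theorem II.4.7]{H}; Theorem~\ref{pullback thm} (with the DVR ${\cal V}=S/P$, whose multiplicity sequence trivially diverges) then finishes, with no need to analyze the LQT tower directly. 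Your ``center of $v$ on $Y$'' idea can be salvaged once $A\in Y$ is known, since the center of the rank~$1$ part $w$ on $Y$ is then $A$ and the center $R_0$ of $v$ satisfies $R_0\subseteq \mathcal{O}_v\cap A=S$ with $A$ a localization of $R_0$, after which Theorem~\ref{pullback thm} applies directly; but as written the proposal omits the finiteness argument on which everything depends and leaves the reconstruction of $S$ unproved.
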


\begin{proof} 
(1) $\Longrightarrow$ (2):  Let $x \in S$ be  such that ${\ff m}_S = xS$.  
By Theorem~\ref{hull},  $S[1/x]$ is the Noetherian hull of $S$ and $S[1/x]$ is  
 a regular ring.  
Since $\dim S > 1$, the ideal $P = \bigcap_{k>0}x^kS$ is a nonzero  prime ideal of 
$S$ \cite[Exercise~1.5, p.~7]{Kap}.  Hence $S$ is non-archimedean.
By  Theorem~\ref{overview},  $S_P$ is the Noetherian hull of $S$ and hence  $S_P = S[1/x]$. 
Let $A = S_P$ and ${\cal V} = S/P$.  
By Theorem~\ref{pullbacks}, $S$ is a pullback of the DVR ${\cal V}$ with respect to the map
$A \to A/{\ff m}_A$.  
By assumption, $S$ is a quadratic Shannon extension of a regular local ring $R$ that is 
essentially finitely generated over $k$.  For sufficiently large $i$, we have     $A = S_P = (R_i)_{P \cap R_i}$ by \cite[Proposition 3.3]{HLOST}. Since $R_i$ is essentially finitely generated over $R$, and $R$ is essentially finitely generated over $k$, we have that $A$ is essentially finitely generated over $k$.  
  By the Dimension Formula \cite[Theorem~15.6, p.~118]{Mat}, 
  \begin{center} 
{\rm tr.deg}$_k \: A/{\ff m}_A  + \dim A  = $ {\rm tr.deg}$_k \:F$. \end{center}  This completes the proof 
that statement 1 implies statement 2.

(2) $\Longrightarrow$ (1):  Let $P ={\ff m}_A$.   By item 2b,  $P$ is a prime ideal of  $S$, $A = S_P$, $P = PS_P$ and ${\cal V} = S/P$.   Let $x \in {\ff m}_S$ be  such that the image of $x$ in the DVR $S/P$ generates the maximal ideal. Since $P = PS_P$, we have $P \subseteq xS$.  Consequently, ${\ff m}_S = xS$, and so $S$ has a principal maximal ideal.   

To prove that $S$ is a  quadratic Shannon extension of a regular local ring that is essentially finitely generated over  $k$, it suffices by Theorem~\ref{pullback thm} to prove:
\begin{description}[(iii)]
\item[(i)] There is a subring $R$ of $S$ that is  a regular local ring essentially finitely generated over $k$.

\item[(ii)]  $A$ is a localization of $R$ at the  prime ideal $P \cap R$. 
  
\item[(iii)]    ${\cal V}$ is a  valuation overring   of $(R+P)/P$ with   divergent multiplicity sequence. 
\end{description}
 
Since $F$ is a finitely generated field extension of $k$ and $A$ (as a localization of $S$) has quotient field $F$, there is a finitely generated $k$-subalgebra $D$ of $A$ such that  the quotient field of $D$ is $F$.  By item 2a,  
$A/P$ has finite transcendence degree over $k$.  Let $a_1,\ldots,a_n$ be elements of $A$ whose images in $A/P$ form a transcendence basis for $A/P$ over $k$. Replacing $D$ with $D[a_1,\ldots,a_n]$, and defining $p = P \cap D$, we may assume that   $A/P$ is algebraic over $\kappa(p)=D_p/pD_p$. In fact, since the normalization of an affine $k$-domain is again an affine $k$-domain, we may assume also that $D$ is an integrally closed finitely generated $k$-subalgebra of $A$ with quotient field $F$. 
 Since $D$ is a finitely generated $k$-algebra, $D$ is universally catenary. By the Dimension Formula \cite[Theorem 15.6, p.~118]{Mat}, we have \begin{center}$\dim D_{p} + $ tr.deg$_k \: \kappa(p) = $ tr.deg$_k \: F.$\end{center}  Therefore, item 2a implies 
 \begin{center}$\dim D_{p} + $ tr.deg$_k \: \kappa(p)  = \dim A + $ tr.deg$_k \: A/P$.    
 \end{center}
 Since $A/P$ is algebraic over $\kappa(p)$, we conclude that $\dim D_p = \dim A$. 

 The normal ring $A$ birationally dominates  the excellent normal ring $D_p$,  so  $A$ is essentially finitely generated over $D_p$ \cite[Theorem 1]{HHS}. Therefore $A$ is 
 essentially finitely generated over $k$.

  Since $A$ is essentially finitely generated over $k$, 
  the local ring  $A$ is in a projective model $X$ of $F/k$. 
  As discussed before the theorem, resolution of singularities implies that
  there exists  a projective model $Y$ of $F/k$ such that every regular local ring in $X$ is in $Y$,  
  every local ring in $Y$ is a regular local ring,   
  and every local ring in $X$ is dominated by  a local ring in $Y$. 
  

Since $A$ is  a regular local ring in $X$,   $A$ is a local ring in the projective model $Y$.   
Let  $x_0,\ldots,x_n \in F$ be nonzero elements such that 
with $D_i := k[x_0/x_i,\ldots,x_n/x_i]$  for each $i \in \{0,1,\ldots,n\}$, we have 
 $$Y = \bigcup_{i =0}^n \:\{(D_i)_Q: Q \in \Spec(D_i)\}.$$  
 Since $S$ has quotient field $F$,  
we may assume that $x_0,\ldots,x_n \in S$. 
 Since 
$A $ is in $ Y$,  there is  $i \in \{0,1,\ldots,n\}$ such that $A = (D_i)_{P \cap D_i}$.

By item 2b,  ${\cal V}=S/P$ is a valuation ring with quotient field $A/P$. For $a \in A$, let $\overline a$ denote
the image of $a$ in the field $A/P$.  Since $S/P$ is a valuation ring of $A/P$, there exists  $j \in \{0,1,\ldots,n\}$ 
 such that 
\begin{equation} \label{factor equation} 
(~\{\overline{x_k/x_i}\}_{k=0}^n ~)(S/P) ~ = ~(\overline{x_j/x_i})(S/P).
\end{equation}
Notice that $x_i/x_i = 1 \notin P$. Hence at least one of the $x_k/x_i \notin P$,  and
Equation~\ref{factor equation} implies $x_j/x_i \not \in P$.  
Since  
$A = S_P$ and $P = PS_P$,   
  every fractional ideal of $S$ contained in $A$ is comparable to $P$ with respect to set inclusion.  
  Therefore $P \subsetneq (x_j/x_i)S.$ 
  This and Equation~\ref{factor equation} imply that 
 \begin{equation} \label{eq2}
  (x_0/x_i,\ldots,x_n/x_i)S = (x_j/x_i)S.
  \end{equation}
   Multiplying both sides of Equation~\ref{eq2} by $x_i/x_j$ we obtain  
 $$ 
 D_j ~ =  ~k[x_0/x_j,\ldots,x_n/x_j]~ \subseteq ~ S.$$  Let $R = (D_j)_{{\ff m}_S \cap  D_j}.$
Since $Y$ is a nonsingular model, $R$ is a regular local ring with $R \subseteq S \subseteq A$. 

We observe next that $A = R_{P \cap R}$.  
Since $R \subseteq A$, we have that $A$ dominates the local ring $A':=R_{P \cap R}.$ The local ring $A'$ is a member of the projective model $Y$, and  every valuation ring dominating the local ring $A$ in $Y$ dominates also the local ring $A'$ in $Y$.     
Since $Y$ is a projective model of $F/k$,  
the Valuative Criterion for Properness  \cite[Theorem II.4.7, p.~101]{H} implies no two distinct  local rings in $Y$ are dominated by the same valuation ring. Therefore, $A = A'$, so that $A =  R_{P \cap R}$.    

Finally, 
 observe that since ${\cal V} = S/P$ is a DVR overring of $(R+P)/P$,  the multiplicity sequence of  $S/P$ over $(R+P)/P$ is divergent. 
By Theorem~\ref{pullback thm}, $S$ is a quadratic Shannon extension of $R$ with 
Noetherian hull $A=R_{P \cap R}$.  By Theorem~\ref{overview}, $S$ is  non-archimeean, 
 so the proof is complete. \qed
\end{proof}

As an application of Theorem~\ref{function field}, we describe for a finitely generated field extension $F/k$ of characteristic $0$  the valuation rings  with principal maximal ideal that arise as quadratic Shannon extensions of regular local rings that are essentially finitely generated over $k$, i.e., the valuation rings on the Zariski-Riemann surface of $F/k$  that arise from desingularization followed by infinitely many successive quadratic transforms of projective models.  Recall that a valuation ring $V$ of $F/k$ is a {\it divisorial} valuation ring if \begin{center}tr.deg$_k \: V/\m_V = $ tr.deg$_k \: F -1 $.\end{center} Such a valuation ring is necessarily a DVR (apply, e.g., \cite[Theorem 1]{Abh}).  

\begin{corollary} Let $F/k$ be a finitely generated field extension where $k$ has characteristic $0$, and let $S$ be a valuation ring of $F/k$ with principal maximal. 
\begin{description}[$(1)$]
\item[{\em (1)}] Suppose rank $S = 1$. Then there is a sequence $\{R_i\}$ (possibly finite)  of LQTs of a regular local ring $R$ essentially finitely type over $k$ such that $S = \bigcup_i R_i$.  This sequence is finite if and only if $S$ is a divisorial valuation ring. 

\item[{\em (2)}] Suppose rank $S >1$. Then 
 $S$ is a quadratic Shannon extension of a regular local ring essentially finitely generated over $k$ if and only if $S$ has rank $2$ and is contained in a divisorial valuation ring of $F/k$. 
 \end{description}
\end{corollary}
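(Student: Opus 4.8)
The plan is to reduce both items to Theorem~\ref{function field} together with the structural facts about quadratic Shannon extensions recorded in Theorems~\ref{hull}, \ref{overview}, and \ref{nonarch valuation}. For item~(1), assume $\operatorname{rank} S = 1$ and $\m_S$ principal. First I would invoke Theorem~\ref{function field}: since $S$ is a valuation ring of rank $1$ with principal maximal ideal, it is in particular a local domain with principal maximal ideal and $\dim S = 1$, so it trivially satisfies being a quadratic Shannon extension only if $\dim S \ge 2$ — so for rank $1$ the theorem does not apply directly and I must argue differently. The correct route: a rank $1$ valuation ring $S$ with principal maximal ideal is a DVR. By resolution of singularities applied to the projective model containing a suitable finitely generated $k$-subalgebra $D$ of $S$ with quotient field $F$ (exactly as in the proof of Theorem~\ref{function field}), there is a regular local ring $R$ of a nonsingular projective model $Y$ with $R \subseteq S$, and then the sequence of LQTs of $R$ along $S$ converges to $S$ by Proposition~\ref{3.7} (a DVR has divergent multiplicity sequence trivially, or rather the sequence terminates). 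The sequence is finite precisely when some $R_i = S$, i.e. when $S$ is essentially of finite type over $k$ as a localization of a model, which by the Dimension Formula is equivalent to $\operatorname{tr.deg}_k S/\m_S = \operatorname{tr.deg}_k F - 1$, i.e. $S$ divisorial.

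For item~(2), assume $\operatorname{rank} S > 1$ and $\m_S = xS$ principal. ($\Leftarrow$) Suppose $S$ has rank $2$ and $S \subseteq V$ for a divisorial valuation ring $V$ of $F/k$. Then $V$ is a DVR, $V$ corresponds to a prime $P = \m_S \cap$ (something); more precisely the height-one prime of $S$ gives a valuation overring, and the rank-$2$ structure forces $S/Q$ to be a rank $1$ valuation ring on $\kappa(Q)$ where $Q$ is the height-one prime, with $S_Q = V$ a DVR. Since $V$ is divisorial, $\operatorname{tr.deg}_k V/\m_V + 1 = \operatorname{tr.deg}_k F$, i.e. condition~(a) of Theorem~\ref{function field}(2) holds with $A = V = S_Q$ and $\m_A = QS_Q$; and $S$ is the pullback of the rank $1$ valuation ring $S/Q$ along $V \to V/\m_V$. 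But Theorem~\ref{function field}(2) requires the valuation on the residue field to be a DVR — here $S/Q$ need only be rational rank $1$. I would handle this by checking that when $S$ itself is a valuation ring of rank $2$, the residue valuation $S/Q$ is a valuation ring of the \emph{field} $V/\m_V$ of rank $1$, and rank $1$ valuation rings of a field are always... not necessarily discrete. So the cleaner approach is to apply Theorem~\ref{pullback cor} or Theorem~\ref{pullback thm} directly rather than Theorem~\ref{function field}: $S = \alpha^{-1}(S/Q)$ along $S_Q \to \kappa(Q)$, and $S/Q$ has divergent multiplicity sequence over the image of $R/P$ for the regular local $R$ produced by resolution. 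For ($\Rightarrow$): if $S$ is a non-archimedean quadratic Shannon extension of such an $R$ (it is non-archimedean since $\dim S \ge 2$ and $\m_S$ principal, by Theorem~\ref{overview}), then by Theorems~\ref{hull} and~\ref{overview} the Noetherian hull $T = S[1/x] = S_Q$ is a regular local ring, and since $S$ is a valuation ring, $T$ is a valuation overring of $S$ that is a regular local ring — hence a DVR, giving $\dim T = 1$, so $\dim S = 2$ by Theorem~\ref{hull}(3); and $T = S_Q$ is a divisorial DVR of $F/k$ by the Dimension Formula applied as in Theorem~\ref{function field}, with $S \subseteq T$.

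The main obstacle I expect is the bookkeeping in item~(2)($\Rightarrow$) confirming that the Noetherian hull $T$, which a~priori is only a regular local ring essentially of finite type over $k$, is actually a \emph{divisorial} DVR of $F/k$ — this requires combining $\dim T = 1$ (forced by $T$ being both regular local and a valuation ring), the fact that $T$ is essentially of finite type over $k$ (from Theorem~\ref{hull}(4), $T$ is a localization of some $R_i$, each essentially of finite type over $k$), and the Dimension Formula \cite[Theorem~15.6]{Mat} to get $\operatorname{tr.deg}_k T/\m_T + 1 = \operatorname{tr.deg}_k F$. A secondary subtlety is making sure in item~(1) that the LQT sequence of $R$ along the DVR $S$ is the \emph{quadratic} sequence (each $\dim R_i \ge 2$ until possibly the last step) rather than stabilizing prematurely, and identifying exactly when termination occurs with divisoriality; Proposition~\ref{3.7} gives convergence of the union to $S$, and divisoriality is the finite-type criterion via the Dimension Formula.
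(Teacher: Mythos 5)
Your overall strategy for item (1) and for the forward direction of item (2) matches the paper's, and your forward direction of (2) (non-archimedean by Theorem~\ref{overview} since $\m_S$ is principal and $\dim S\ge 2$; the Noetherian hull $T$ is a regular local valuation overring of $S$, hence a DVR, so $\dim S=2$ by Theorem~\ref{hull}(3); $T$ is essentially of finite type over $k$ by Theorem~\ref{hull}(4), so the Dimension Formula makes it divisorial) is correct and even somewhat more self-contained than the paper, which gets $\dim S=2$ from \cite{HLOST} and extracts $A$ from Theorem~\ref{function field}(1)$\Rightarrow$(2). However, there are two genuine gaps. First, in item (1) the direction ``$S$ divisorial $\Rightarrow$ the LQT sequence is finite'' is not established by your chain ``divisorial $\Leftrightarrow$ essentially of finite type over $k$ $\Leftrightarrow$ finite sequence'': the Dimension Formula only gives the easy implication (finite sequence $\Rightarrow$ $S$ is a localization of a model $\Rightarrow$ divisorial), while the converse --- that a divisorial valuation ring dominating $R$ is reached after finitely many quadratic transforms --- is a nontrivial theorem, which the paper disposes of by citing \cite[Proposition 4]{Abh}; you need that citation or an actual argument.

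Second, and more seriously, in the reverse direction of item (2) you abandon the intended application of Theorem~\ref{function field} because of a worry that the residue ring $S/Q$ might be a non-discrete rank-one valuation ring. That worry is unfounded: since $\m_S=xS$ is principal and $Q=\m_A\subsetneq\m_S$, the image of $x$ generates the maximal ideal of the rank-one valuation ring $S/Q$, and a rank-one value group with a least positive element is $\mathbb{Z}$, so $S/Q$ is a DVR (the paper notes exactly this). With that observation, condition (2) of Theorem~\ref{function field} holds verbatim for $A$ (divisoriality of $A$ is precisely condition (a)), and the implication (2)$\Rightarrow$(1) finishes the proof; this is the paper's argument. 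Your substitute --- applying Theorem~\ref{pullback thm} ``with the regular local $R$ produced by resolution'' --- is not a proof: producing a regular local ring $R\subseteq S$, essentially of finite type over $k$, with $R_{P\cap R}$ equal to the given DVR $A$ is exactly the hard content of the proof of Theorem~\ref{function field} (the projective-model argument together with the valuative criterion of properness), and resolution of singularities by itself does not supply it. So either quote Theorem~\ref{function field}(2)$\Rightarrow$(1) after noting that $S/Q$ is a DVR, or reproduce that construction; as written the step is circular.
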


\begin{proof} For item 1, assume rank $S = 1$. By resolution of singularities, there is a nonsingular projective model $X$ of $F/k$ with function field $F$. Let $R$ be the regular local ring in $X$ that is dominated by  $S$. Let $\{R_i\}$ be the sequence of LQTs of $R$ along $S$. If $\{R_i\}$ is finite, then $\dim R_i = 1$ for some $i$, so that $R_i$ is a DVR. Since $S$ is a DVR between $R_i$ and its quotient field, we have $R_i = S$. Otherwise, if $\{R_i\}$ is infinite, then Proposition~\ref{3.7} implies $S = \bigcup_i R_i$ since $S$ is a DVR.    That the sequence is finite if and only if $S$ is a divisorial valuation ring follows from \cite[Proposition 4]{Abh}. 

For item 2, suppose rank $S>1$. Assume first that $S$ is a Shannon extension of a regular local ring essentially finitely generated over $k$. By   \cite[Theorem~8.1]{HLOST}, $\dim S = 2$.  By Theorem~\ref{function field}, $S$ is a contained in a regular local ring $A \subseteq F$ such that $A/\m_A$ is the quotient field of a proper homomorphic image of $S$ and  \begin{equation}\label{eq3} {\rm tr.deg}_k \: A/\m_A + \dim A = {\rm trdeg}_k \: F.\end{equation}

 We claim $A$ is a divisorial valuation ring of $F/k$.  Since
 $A/\m_A$ is the quotient field of a proper homomorphic image of $S$, it follows  that \begin{equation}\label{eq4} {\rm tr.deg}_k \: A/\m_A < \:
{\rm trdeg}_k F.\end{equation} From equations~\ref{eq3} and~\ref{eq4} we conclude that $\dim A \geq 1$.  
  As an overring of the valuation ring $S$, $A$ is also a valuation ring.
Since $A$ is a regular local ring that is not a field, it follows that $A$ is a DVR.  Thus  $\dim A = 1$ and equation~\ref{eq3} implies that \begin{center} tr.deg$_k \: A/\m_A = $ trdeg$_k \: F - 1,$\end{center} which proves that $A$ is a divisorial valuation ring.  

Conversely, suppose rank $S = 2$ and $S$ is a quadratic Shannon extension of a regular local ring that is essentially finitely generated over $k$. Theorem~\ref{function field} and rank~$S = 2$ imply $S$ is contained in a regular local ring $A$ with $\dim A  = 1$  and 
\begin{center} tr.deg$_k \: A/\m_A + 1 = $ trdeg$_k \: F$. \end{center} Thus $A$ is a divisorial valuation ring.

 Finally, suppose rank $S = 2$ and 
$S$ is contained in a divisorial valuation ring $A$ of $F/k$. Since $S$ is a valuation ring of rank 2 with principal maximal ideal it follows that ${\ff m}_A \subseteq S$ and $S/{\ff m}_A$ is DVR. 
Since $A$ is a divisorial valuation ring, we have  \begin{center}tr.deg$_k \: A/\m_A + \dim A = $ trdeg$_k \: F$.\end{center}
 As  a DVR, $A$ is a regular local ring, so  Theorem~\ref{function field} implies $S$ is a quadratic Shannon extension of a regular local ring that is essentially finitely generated over $k$. 
 \qed
\end{proof}

\begin{example} \label{pullback example}  Let  $k$ be a field of characteristic $0$,  let $x_1,\ldots,x_n,y_1,\ldots,y_m$ be algebraically independent over $k$, and let $$A = k(x_1,\ldots,x_n)[y_1,\ldots,y_m]_{(y_1,\ldots,y_m)}.$$  Let $\alpha:A \rightarrow k(x_1,\ldots,x_n)$ be the canonical residue map. 
Then for every DVR $V$ of $k(x_1,\ldots,x_n)/k$, the ring $S = \alpha^{-1}(V)$ is 
by Theorem~\ref{function field} a quadratic Shannon extension of a regular local ring that is essentially finitely generated over $k$. As in the proof that statement 2 implies statement 1 of Theorem~\ref{function field}, the Noetherian hull of $S$ is $A$.  

Conversely, suppose $S$ is 
a
$k$-subalgebra of $F$ with principal maximal ideal  such that $S$ is a quadratic Shannon extension of a regular local ring that is essentially finitely generated over $k$ and $S$ has Noetherian hull $A$.  As in  the proof that statement 1 implies statement 2 of  
 Theorem~\ref{function field}, there is a DVR $V$ of $k(x_1,\ldots,x_n)/k$ such that $S = \alpha^{-1}(V)$.  

It follows that there is a one-to-one correspondence between the DVRs of $k(x_1,\ldots,x_n)/k$ and the  quadratic Shannon extensions $S$ of regular local rings that are essentially finitely generated over $k$, have Noetherian hull $A$, and have a principal maximal ideal.
\end{example}

Theorem~\ref{function field} concerns quadratic Shannon extensions of regular local rings that are essentially finitely generated over $k$. 
Example~\ref{5.3ex} is  a quadratic Shannon extension of a regular local ring $R$ in a function field for which $R$ is not essentially finitely generated over $k$.

\begin{example}  \label{5.3ex}  Let $F = k(x,y, z)$, where $k$ is a field and $x, y, z$ are algebraically  independent over $k$. 
 Let $\tau \in  xk[[x]]$ 
be a formal power series in $x$ such that $x$ and $\tau$ are algebraically independent over $k$.  
Set  $y = \tau$ and define
$V = k[[x]] \cap k(x, y)$.   Then $V$ is a DVR on the field $k(x, y)$ with maximal ideal $xV$ and residue field $V/xV = k$.   
Let $V(z) =  V[z]_{xV[z]}$.  Then $V(z)$ is a DVR on the field $F$ with residue field $k(z)$,  and  
$V(z)$ is not essentially finitely generated over $k$. Let  $R = V[z]_{(x, z)V[z]}$.
Notice that $R$ is a  2-dim RLR. 
  The pullback diagram  of type $\square^*$ 
\begin{center}
\begin{tikzcd}
   {S} = \alpha^{-1}(k[z]_{zk[z]})\arrow[twoheadrightarrow]{r}\arrow[hookrightarrow]{d} 
  &  k[z]_{zk[z]}\arrow[hookrightarrow]{d}
  \\ 
   V(z)\arrow[twoheadrightarrow]{r}{\alpha} 
 &  k(z)
\end{tikzcd}
\end{center}
defines a rank~2 valuation domain $S$ on $F$ that is by Theorem~\ref{pullback cor} a quadratic Shannon extension of  $R$.  For each positive integer $n$,  define 
$R_n = R[\frac{x}{z^n}]_{(z, \frac{x}{z^n})R[\frac{x}{z^n}]}$. 
Then $S = \bigcup_{n \ge 1}R_n$. 
\end{example}

\section{Quadratic Shannon extensions and GCD domains} \label{section 6}

As an application of the pullback description of non-archimedean quadratic Shannon extensions given in Section 4, 
we show in Theorem~\ref{quadratic GCD}  that 
 a quadratic Shannon extension $S$  is coherent, a GCD domain or a finite conductor domain if and only if $S$ is a valuation domain. We extend this fact to all  quadratic Shannon extensions $S$, regardless of 
 whether $S$ is archimedean, by applying structural results for archimedean
 quadratic  Shannon extensions from \cite{HLOST}.   

\begin{definition}  \label{3.65}
 Following McAdam in \cite{McAdam},  an
integral domain $D$ is a {\it finite conductor domain} if 
for elements $a, b$ in the field of fractions of $D$, the 
$D$-module $aD \cap bD$ is finitely generated.  A ring is said to be {\it coherent} if every 
finitely generated ideal is finitely presented.  Chase \cite[Theorem~2.2]{Chase} proves that an 
integral domain $D$ is coherent if and only if the intersection of two finitely generated ideals of $D$ 
is finitely generated. Thus a coherent domain is a finite conductor domain.
 An integral domain $D$ is a   {\it GCD domain} if  for all $a, b \in D$, $aD \cap bD$ is a principal 
ideal of $D$ \cite[page~76 and Theorem~16.2, p.174]{Gil}.  It is clear from the definitions that a 
GCD domain is a finite conductor domain.
\end{definition}

 Examples of  GCD domains  and  finite conductor domains that are not coherent are 
given by Glaz in \cite[Example~4.4  and Example~5.2]{Glaz2} and by 
Olberding and Saydam in \cite[Prop. 3.7]{OS}.   Every Noetherian integral domain 
is coherent,  and a Noetherian domain $D$ is a GCD domain if and only if it is a UFD. 
Noetherian domains that are not UFDs are examples of coherent domains that are 
not GCD domains.

\begin{theorem} \label{quadratic GCD}   Let $S$ be a quadratic Shannon 
extension of a regular local ring.  The following are equivalent:
\begin{description}[(2)]
\item[{\em (1)}] $S$ is coherent.
\item[{\em (2)}] $S$ is a GCD domain. 
\item[{\em (3)}] $S$ is a finite conductor domain.
\item[{\em (4)}] $S$ is a valuation domain.
\end{description}
\end{theorem}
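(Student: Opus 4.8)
The implications $(4)\Rightarrow(2)\Rightarrow(3)$ and $(4)\Rightarrow(1)\Rightarrow(3)$ are immediate: a valuation domain is a GCD domain and is coherent (being Pr\"ufer, finitely generated ideals are principal, hence finitely presented), and both GCD domains and coherent domains are finite conductor domains by Definition~\ref{3.65}. So the real content is the single implication $(3)\Rightarrow(4)$: if $S$ is a finite conductor domain, then $S$ is a valuation ring. I would prove the contrapositive, splitting into the archimedean and non-archimedean cases according to the dichotomy set up in Section~2.

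\textbf{The non-archimedean case.} Suppose $S$ is non-archimedean and not a valuation ring; I want to produce $a,b\in S$ with $aS\cap bS$ not finitely generated. By Theorem~\ref{overview}, $T=S_Q=S[1/x]$ is the Noetherian hull, where $Q=\bigcap_n x^nS$ is a common prime ideal of $S$ and $T$, and $T$ is a regular local ring with maximal ideal $Q$; moreover $S/Q$ is a rational rank $1$ valuation domain. By Theorem~\ref{pullback cor} (or Theorem~\ref{pullbacks}), $S$ is the pullback of the rational rank $1$ valuation ring $\mathcal V=S/Q$ along $\alpha:T\to\kappa(Q)=T/Q$. Now the point is that $S$ fails to be coherent/finite-conductor precisely because of the interaction of the Noetherian ring $T$ with the non-finitely-generated valuation ideal structure of $\mathcal V$. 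The standard pullback fact (see \cite{Fon,FHP,Gabelli-Houston}) is that in a diagram of type $\square^*$, $D=\alpha^{-1}(B)$ is coherent only under stringent hypotheses on $A$, $B$ and the conductor; here the conductor is $Q$, which is a \emph{nonfinitely generated} $S$-module whenever $S\ne T$ (if $Q$ were finitely generated over $S$, then $Q=x^mQ$ would force $Q=0$ by Nakayama-type reasoning on $S/x^mS$, contradicting $\dim S\ge 2$). Concretely: since $S$ is not a valuation ring and $\dim S\ge 2$, pick $a,b\in S$ that are incomparable modulo $Q$ in $\mathcal V$ is impossible since $\mathcal V$ is a valuation ring, so instead one exploits that $\mathcal V$ has rational (not discrete) rank-$1$ value group OR that the map $T\to\kappa(Q)$ has a large kernel: choose $a\in S$ with $aT=T$ but $a\notin$ (units of $S$), i.e. $a$ a unit of $T$ that is not a unit of $S$, so $\alpha(a)\in\mathfrak m_{\mathcal V}$ is a nonzero nonunit of $\mathcal V$; then $aS\cap Q = aS\cap bS$-type modules can be shown to be non-finitely-generated by reducing mod $Q$ to the valuation ring $\mathcal V$ and invoking that a non-discrete or otherwise non-Noetherian valuation ring is not coherent, together with the fact that finite generation of $aS\cap bS$ over $S$ would descend to finite generation of the corresponding module over $\mathcal V$. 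I expect this descent argument — transferring a finite conductor identity through the pullback square to $\mathcal V$ — to be the main technical obstacle, and the cleanest route is to cite the transfer results on coherence in pullbacks (e.g.\ \cite{Fon} or \cite{Gabelli-Houston}): $D$ coherent $\Rightarrow$ $B$ coherent and the conductor finitely generated over $D$, and here the conductor $Q$ is not finitely generated over $S$.

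\textbf{The archimedean case.} Suppose $S$ is archimedean and not a valuation ring. By Theorem~\ref{complete intcl}, the function $w$ is a rank~$1$ nondiscrete valuation with valuation ring $W$ dominating $S$, and $W=V$ (the boundary valuation) exactly when $S$ is completely integrally closed. Here one uses the structural results from \cite{HLOST} on archimedean quadratic Shannon extensions: $S=V\cap T$ by Theorem~\ref{hull}(2), with $T$ a Noetherian regular UFD that properly contains $S$ (since $\dim T=\dim S-1\ge 1$ and $S\ne V$). The strategy is to exhibit two elements whose intersection ideal is infinitely generated by playing $W$ (or $V$) against the nondiscreteness of $w$: because $w$ is \emph{nondiscrete} of rank one, there is a strictly decreasing sequence of values, and one constructs $a,b\in S$ such that $aS\cap bS$, when measured by $w$ and by the order valuations $\ord_{R_i}$, requires infinitely many generators — the failure of coherence here is the familiar fact that a rank-$1$ valuation domain with non-discrete (in particular, non-finitely-generated maximal ideal) value group is not coherent when $\dim\ge 1$, combined with $S=V\cap T$ and properties of the $R_i$. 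A clean packaging: since $S$ is archimedean and not a valuation ring, $\dim S\ge 2$; the maximal ideal $\mathfrak m_S$ is not finitely generated (else $S$ would be Noetherian local, forcing $S=R_i$ for some $i$, impossible for an infinite ascending chain), and moreover for a suitable $\mathfrak m_S$-primary $x$, the ideals $x^{1/n}$-type elements of $W\cap S$ show $\bigcap$ non-finite-generation. I would finish by citing the relevant lemma from \cite{HLOST} that pins down which finitely generated ideals of $S$ exist.

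\textbf{Summary of the plan and the hard step.} In outline: (i) dispatch $(4)\Rightarrow(2),(1)$ and $(2),(1)\Rightarrow(3)$ as formal; (ii) for $(3)\Rightarrow(4)$, split into non-archimedean and archimedean via Section~2; (iii) in the non-archimedean case, use the pullback description (Theorem~\ref{pullback cor}) and transfer theory for coherence in $\square^*$-pullbacks, noting the conductor $Q$ is not a finitely generated $S$-module; (iv) in the archimedean case, use $S=V\cap T$ and the nondiscreteness of $w$ from Theorem~\ref{complete intcl} together with structural facts from \cite{HLOST}. The principal obstacle is the non-archimedean descent: showing rigorously that finite generation of $aS\cap bS$ over the pullback $S$ would force the conductor $Q$ to be finitely generated over $S$ (or force $\mathcal V$ to be a DVR), which contradicts $\dim S\ge 2$; this is where one must be careful to invoke the correct transfer lemma rather than reprove it.
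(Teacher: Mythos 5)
Your reduction to the single implication (3)$\Rightarrow$(4) and the split into archimedean/non-archimedean cases matches the paper's strategy, but both of your cases have real gaps. In the non-archimedean case you lean on coherence-transfer results for pullbacks of type $\square^*$ (``$D$ coherent $\Rightarrow$ the conductor is finitely generated''), but the implication you must prove is that $S$ is not a \emph{finite conductor} domain, a strictly weaker hypothesis than coherence; a coherence-transfer lemma only rules out (1), not (2) or (3). Worse, your fallback mechanism --- that $\mathcal V=S/Q$ is non-discrete or non-Noetherian and hence not coherent --- fails outright: $S/Q$ can perfectly well be a DVR (this is exactly what happens in the existence result of Theorem~\ref{3.8}(1), built from Chevalley's theorem, and in Example~\ref{5.3ex}), yet $S$ is still not finite conductor. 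The paper's Case~1 avoids all transfer theory: since $\dim S_Q\ge 2$ (else $S$ would be a DVR), one picks $f,g\in Q$ with no common factor in the UFD $S_Q$, sets $I=fS\cap gS$, and uses $Q=xQ$ (which is where the non-archimedean structure enters, and which also gives your observation that $Q$ is not finitely generated) to show $xI=I$, whence $\mathfrak m_S I=I$ and Nakayama kills finite generation. Your proposal names this descent step as ``the main technical obstacle'' but does not supply it, and the route you indicate would not supply it.

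In the archimedean case the gap is larger: the paper must further split according to whether $S$ is completely integrally closed (Theorem~\ref{complete intcl}), and the two subcases use quite different devices, neither of which appears in your sketch. When $S$ is archimedean but not completely integrally closed, the exhibited non-finitely-generated intersection of principal fractional ideals is $\mathfrak m_S=\theta^{-1}S\cap S$ for an almost integral element $\theta\notin S$ (\cite[Corollary 6.6]{HLOST}), together with $\mathfrak m_S^2=\mathfrak m_S$; note that your parenthetical justification that a finitely generated $\mathfrak m_S$ would make $S$ Noetherian is not valid --- the paper instead argues that $\mathfrak m_S$ principal would contradict archimedeanness and then invokes \cite[Proposition 3.5]{HLOST}. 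When $S$ is completely integrally closed, the paper uses that $T$ is a non-local UFD, chooses $f,g\in S$ with no common factor in $T$, and shows $\mathfrak m_S(fS\cap gS)=fS\cap gS$ by producing units of $T$ of arbitrarily small positive $w$-value (these values generate a non-discrete subgroup of $\mathbb R$ because $\sum_n w(\mathfrak m_n)<\infty$); the non-coherence of the valuation ring $W$ itself, which is what you appeal to, does not transfer to $S=T\cap W$ without such an argument. So while your outline of the logical architecture is right, the concrete constructions that make each case work are missing or misattributed.
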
 

\begin{proof}
It is true in general that if $S$ is a valuation domain, then $S$ satisfies each of the first 3 items.
As noted above, if $S$ is coherent or a GCD domain, then $S$ is a finite
conductor domain.
To complete the proof of Theorem~\ref{quadratic GCD}, 
it suffices to show that if $S$ is not a valuation domain, 
then $S$ is not a finite conductor domain. Specifically, we assume $S$ is not a valuation domain and we consider three cases. In each case, we find a pair of principal fractional ideals of $S$ that is not finitely generated.

{ \bf Case 1: }
$S$ is non-archimedean.
By Theorem~\ref{overview}, there is a unique dimension $1$ prime ideal $Q$ of $S$, $Q S_Q = Q$ and $S_Q$ is the Noetherian hull of $S$.  If $\dim S_Q = 1$, then, as a regular local ring, $S_Q$ is a DVR; this, along with the fact that $Q = QS_Q$ implies  $S$ is a DVR, contrary to the assumption that $S$ is not a valuation domain. Therefore $\dim S_Q \geq 2$, and 
 there exist elements $f, g \in Q$ that have no common factors in the UFD $S_Q$.
Consider $I = f S \cap g S$,
let $x \in \m_S$ such that $\sqrt{x S} = \m_S$ (see Theorem~\ref{hull}),
and let $a \in I$.
Since $a \in f S_{Q} \cap g S_{Q} = f g S_{Q}$, 
we can write $a = f g y$ for some $y \in S_{Q}$. Now $g y \in Q S_{Q} = Q$ and $f y \in Q S_{Q} = Q$, so $\frac{g y}{x} \in S$ and $\frac{f y}{x} \in S$. Thus 
 $\frac{a}{x} = f \frac{g y}{x} = g \frac{f y}{x} \in I$. This shows 
that $x I = I$ and so $\m_S I = I$.
Since $I \ne (0)$, Nakayama's Lemma implies that $I$ is not finitely generated. 

{ \bf Case 2: } 
$S$ is archimedean, but not completely integrally closed.
By Theorem~\ref{hull}, $\dim S \ge 2$.
We claim that $\m_S $ is not  finitely generated    as an ideal of $S$.   Since $\dim S > 1$,   if $\m_S$ is 
a  principal  ideal, then  $\bigcap_{i}{\ff m}_S^i$ is a nonzero prime ideal of $S$, a contradiction 
to the assumption that $S$ is archimedean. Thus $\m_S$ is not principal.
By \cite[Proposition 3.5]{HLOST}, this implies $\m_S^2 = \m_S$.
From Nakayama's Lemma it follows that $\m_S$ is not finitely generated. 
 Since $S$ is not completely integrally closed, there is an almost integral element $\theta$ over $S$ 
that is not in $S$.  
By \cite[Corollary 6.6]{HLOST}, $\m_S = \theta^{-1} S \cap S$.

{ \bf Case 3: } 
$S$ is archimedean and completely integrally closed.
By Theorem~\ref{hull}, $\dim S \geq 2$. 
By Theorems~\ref{hull} and \ref{complete intcl}, 
$S = T \cap W$, 
where $W$ is the rank~1 nondiscrete valuation ring 
with associated valuation $w (-)$ as in Definition~\ref{w-function}
and $T$ is a UFD that is a localization of $S$.
Since $\sum_{n \ge 0} w (\m_n) < \infty$ by Theorem~\ref{overview},
and since $\m_n S$ is principal and generated by a unit of $T$ for $n \gg 0$,
the $w$-values of units of $T$ generate a non-discrete subgroup of $\mathbb{R}$.

Since $S$ is archimedean, Theorem~\ref{overview} implies $T$ is a non-local UFD. 
Therefore there exist elements $f, g \in S$ that have no common factors in $T$.
As in Case 1, we consider $I = f S \cap g S$.
Since $S = T \cap W$, it follows that
\begin{align*}
  I
    &= (f T \cap g T) \cap (f W \cap g W) \\
    &= f T \cap g T \cap \{ a \in W \mid w (a) \ge \max \{ w (f), w (g) \} \}.
\end{align*}
Assume without loss of generality that $w (f) \ge w (g)$.

For $a \in I$, write $a = (\frac{a}{f}) f$ in $S$ and consider $w (a)$.
Since $\frac{a}{f}$ is divisible by $g$ in $T$, it is a non-unit in $T$, and thus it is a non-unit in $S$.
Since $W$ dominates $S$, it follows that $w (\frac{a}{f}) > 0$ and thus $w (a) > w (f)$.

We claim that $\m_S I = I$.
Since the $w$-values of the units of $T$ generate a non-discrete subgroup of $\mathbb{R}$, for any $\epsilon > 0$, there exists an unit $x$ in $T$ with $0 < w (x) < \epsilon$.
Then for $a \in I$ and for some $x$ with $0 < w (x) < w (a) - w (f)$, we have $\frac{a}{x} \in I$ and thus $a \in \m_S I$.
Since $\m_S I = I$ and $I \ne (0)$, Nakayama's Lemma implies that $I$ is not finitely generated.

In every case, we have constructed a pair of principal fractional ideals of $S$ whose intersection is not finitely generated.
We conclude that if $S$ is not a valuation domain, then $S$ is not a finite conductor domain. \qed


\end{proof}

\end{document}